\documentclass{amsart}
\usepackage{amsmath,hyperref,amsfonts,amssymb,amsthm,mathrsfs,mathtools,comment}
\usepackage{pdfpages,inputenc,graphicx}
\usepackage[T1]{fontenc}
\hypersetup{
hidelinks,
pdftitle={Generically Ordinary One-Parameter Hyperelliptic Families Are Dense},
pdfauthor={Hui June Zhu}
}

\theoremstyle{plain}
\newtheorem{theorem}{Theorem}[section]
\newtheorem{prop}[theorem]{Proposition}
\newtheorem{lemma}[theorem]{Lemma}
\newtheorem{corollary}[theorem]{Corollary}

\theoremstyle{remark}
\newtheorem{remark}[theorem]{Remark}

\DeclareMathOperator{\Gal}{Gal}
\DeclareMathOperator{\sgn}{sgn}

\newcommand{\cI}{\mathcal{I}} 
\newcommand{\cD}{\mathcal{D}}
\newcommand{\cS}{\mathcal{S}}
\newcommand{\cK}{\mathcal{K}}

\newcommand{\cO}{\mathcal{O}}
\newcommand{\cT}{\mathcal{T}}
\newcommand{\cC}{\mathcal{C}}

\newcommand{\cP}{\mathcal{P}}

\newcommand{\GO}{\mathscr{G\!O}}
\newcommand{\Spec}{\mathrm{Spec}}
\newcommand{\disc}{\mathrm{disc}}
\newcommand{\Frob}{\mathrm{Frob}}

\newcommand{\Ord}{\mathrm{Ord}} 

\renewcommand{\bar}{\overline}

\newcommand{\A}{\mathbb{A}}

\newcommand{\F}{\mathbb{F}}

\newcommand{\Q}{\mathbb{Q}}
\newcommand{\Z}{\mathbb{Z}}

\newcommand{\fP}{\mathfrak{P}}
\newcommand{\fp}{\mathfrak{p}}
\newcommand{\mf}{\mathfrak{f}}

\newcommand{\va}{{\vec{a}}}

\newcommand{\vA}{{\vec{A}}}
\newcommand{\vb}{{\vec{b}}}

\newcommand{\vq}{{\vec{q}}}
\newcommand{\bdelta}{{\vec\delta}}

\begin{document}

\title[Generically Ordinary Hyperelliptic Families]
{Generically ordinary one-parameter hyperelliptic families are dense}

\author{Hui June Zhu}
\date{August 23, 2026}

\address{
Department of Mathematics,
University at Buffalo,
State University of New York,
Buffalo, NY 14260, USA}
\email{hjzhu@math.buffalo.edu}

\begin{abstract}
Let $d=2g+1$ with $g\ge 2$, and let $\A_\Z^{d-1}$ be the coefficient
space parametrizing the hyperelliptic families
\[
\cC_{\va}: y^2=x^d+a_{d-1}x^{d-1}+\cdots+a_1x+t
\]
over the $t$-line.
We construct a Zariski-open dense subscheme
$U\subseteq\A^{d-1}_\Z$ such that, for every coefficient vector
$\va=(a_1,\ldots,a_{d-1})$ in
$U_\Q(\bar\Q)\cap\bar\Z^{d-1}$, there exists $B(d,\va)>0$ with the
following property: for every rational prime $p>B(d,\va)$ and every
prime ideal $\fp\subset\cO_{\Q(\va)}$ lying above $p$, the reduction of
$\cC_\va$ modulo $\fp$ is generically ordinary.
For every prime $p\ge 2d-1$, we prove that the generically ordinary locus
$\GO_p$ is Zariski-open and dense in $\A_{\F_p}^{d-1}$.
For arbitrary $\va\in\bar\Z^{d-1}$ and every prime
$p\equiv 1\pmod d$, the reductions of $\cC_\va$ at all primes above
$p$ are generically ordinary.
Finally, for arbitrary $\va\in\bar\Z^{d-1}$, the set of ordinary primes
of the hyperelliptic curve
\[
y^2=x^d+a_{d-1}x^{d-1}+\cdots+a_1x+t
\]
over $\Q(\va)(t)$ has natural lower density at least
$1/[\Q(\va,\zeta_d):\Q(\va)]$; after base change to
$\Q(\va,\zeta_d)(t)$, it has natural density $1$.
\end{abstract}

\maketitle

\section{Introduction}

Katz conjectured in \cite{Kat18}
that, for $d\ge5$, the reduction modulo $p$ of the hyperelliptic family
with affine chart
$y^2=x^d-dx+t$
is generically ordinary for all sufficiently large primes
$p$.
This conjecture was recently
proved in \cite{Zhu26}, with an
explicit bound on $p$ that is quadratic in $d$.
This paper investigates the full coefficient space
$\A_\Z^{d-1}$ of one-parameter hyperelliptic families of the form
$y^2=x^d+a_{d-1}x^{d-1}+\cdots+a_1x+t$ and studies their reductions in
characteristic $p>2$. Thus each coefficient vector
$(a_1,\ldots,a_{d-1})$ determines one such family. We investigate the following question:
\emph{How common are the coefficient vectors
$(a_1,\ldots,a_{d-1})\in\bar\Z^{d-1}$ for which the reductions of the
resulting family are generically ordinary at almost all primes?}

For the rest of the paper, let $d=2g+1$ where $g\ge 2$, and write $\bar\Z$
for the ring of algebraic integers in $\bar\Q$.
For a commutative ring $R$ and
$\va=(a_1,\ldots,a_{d-1})\in R^{d-1}$, put
\begin{equation}\label{E:f}
f_{\va,t}(x)=x^d+a_{d-1}x^{d-1}+\cdots+a_1x+t.
\end{equation}
If $R=K$ is a field in which $d$ is invertible, then $f_{\va,t}$
is square-free over $K(t)$. Indeed, its discriminant
$\disc_x f_{\va,t}$ is nonzero because, as a polynomial in $t$, it has
leading term $\pm d^d t^{d-1}\ne 0$.

If $\va\in{\bar\Z}^{d-1}$, then $F\coloneqq \Q(\va)=\Q(a_1,\ldots,a_{d-1})$ is a number
field, and $N_\va(t)\coloneqq \disc_x(f_{\va,t})\in\cO_F[t]$. Put
$
\cS_\va^\circ\coloneqq \Spec\cO_F[1/2,t,1/N_\va(t)].
$
Let
\[
\cC_\va\longrightarrow \cS_\va^\circ
\]
be the smooth projective hyperelliptic family with the standard affine
chart
\[
y^2=f_{\va,t}(x),
\]
and write $C_\va/F(t)$ for its generic fiber.
Our first theorem shows that, for a generic coefficient vector $\va$,
the family $\cC_{\va}$ has \emph{generically ordinary}
reduction at every prime lying above all but finitely many rational primes.

\begin{theorem}\label{T:main}
Let $d=2g+1$ with $g\ge 2$.
There exist Zariski-open dense subschemes
$U_k\subseteq \A_\Z^{d-1}$ for $k\in(\Z/d\Z)^\times$ such that
$U\coloneqq \bigcap_k U_k$ is Zariski-open and dense, and the following statements hold.
\begin{enumerate}
\item Let
$\va\in U_\Q(\bar\Q)\cap\bar\Z^{d-1}$ and $F=\Q(\va)$. Then
there exists $B(d,\va)>0$
such that,
for every nonzero prime
ideal $\fp\subset\cO_F$ whose residue characteristic $p$ satisfies
$p>B(d,\va)$,
the reduction of
$
\cC_{\va}\longrightarrow \cS_\va^\circ
$
modulo $\fp$
is generically ordinary.
\item
Let $k\in(\Z/d\Z)^\times$,
$\va\in U_{k,\Q}(\bar\Q)\cap {\bar\Z}^{d-1}$, and $F=\Q(\va)$.
Then there exists $B_k(d,\va)>0$
such that,
for every nonzero prime ideal $\fp\subset \cO_F$
whose residue characteristic $p$ satisfies $p\equiv k\pmod d$
and $p>B_k(d,\va)$, the reduction of
$\cC_{\va}\longrightarrow \cS_\va^\circ$
modulo $\fp$ is generically ordinary.
\end{enumerate}
\end{theorem}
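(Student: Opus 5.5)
The plan is to reduce generic ordinarity to the nonvanishing of one explicit polynomial in $(\va,t)$ modulo $p$, and then to control that polynomial uniformly in $p$ within each residue class $p\equiv k\pmod d$.

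\textbf{Hasse--Witt reduction.} For a prime $\fp\mid p$ with $p>2$, $p\nmid d$, the fibre of $\cC_\va$ over a point $t$ is ordinary iff its Hasse--Witt (Cartier--Manin) matrix is invertible. That $g\times g$ matrix is
\[
H_p(\va,t)=\bigl(c_{pi-j}\bigr)_{1\le i,j\le g},\qquad c_m=\text{coefficient of }x^m\text{ in }f_{\va,t}(x)^{(p-1)/2}.
\]
The family is generically ordinary modulo $\fp$ iff $\det H_p(\bar\va,t)\ne 0$ in $\F_\fp[t]$. It therefore suffices to exhibit one coefficient, in $t$, of $\det H_p(\va,t)\in\Z[\va,t]$ that is nonzero at $\bar\va$.

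\textbf{Isolating a coefficient that is uniform in $p$.} Expand each $c_m$ multinomially in the monomials $x^d,a_{d-1}x^{d-1},\dots,a_1x,t$. Assign $x$ weight $1$, $a_i$ weight $d-i$, and $t$ weight $d$. Then $f_{\va,t}$ is weighted homogeneous, and $\det H_p$ is weighted homogeneous of a degree that depends only on $p$ and $g$.

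Write $p=dq+k$ with $k$ fixed. I would single out an extremal $t$-degree term of $\det H_p$, for example the one of maximal $t$-degree. Its coefficient is a polynomial $D_{p}(\va)$ whose support is determined by the residues $ki\bmod d$, and hence depends only on $k$. Its coefficients are products of multinomial coefficients of shape $\binom{(p-1)/2}{\ldots}$ whose lower entries are affine in $q$.

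Reducing modulo $p$ and using $(p-1)/2\equiv -1/2$, these coefficients become values of fixed polynomials in the rational number $-k/d$, as in the Adolphson--Sperber style ``Hasse polynomial'' limit used in \cite{Zhu26}. The target is
\[
D_p(\va)\equiv c_p\, H_k(\va)\pmod p
\]
for a fixed $H_k\in\Z[1/d!][\va]$ that is independent of $p$, and a scalar $c_p$ that is a $p$-adic unit for $p>2d$.

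\textbf{Nonvanishing and conclusion.} Set $U_k=\{H_k\ne 0\}\subseteq\A^{d-1}_\Z$. To see that $U_k$ is dense, I would check $H_k\not\equiv 0$ by specialization.
\begin{enumerate}
\item When $k=1$, take $\va=0$: then $y^2=x^d+t$, whose Hasse--Witt matrix is diagonal with unit binomial entries times powers of $t$.
\item For general $k$, take the Katz family $\va=(-d,0,\dots,0)$. Here \cite{Zhu26} already gives generic ordinarity for large $p$ in every residue class. If I instead take the specialization literally, I would compute the single surviving monomial directly, say the $a_1$-power term, and verify that its rational coefficient is nonzero.
\end{enumerate}

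Now let $\va\in U_{k,\Q}(\bar\Q)\cap\bar\Z^{d-1}$. Then $H_k(\va)\in\cO_F[1/d!]$ is nonzero, so its norm $N$ is a nonzero rational number. Take
\[
B_k(d,\va)=\max\bigl(2d,\ \text{primes dividing the numerator of }N\bigr).
\]
For $p\equiv k\pmod d$, $p>B_k$ and $\fp\mid p$, we get $D_p(\bar\va)\ne0$. Hence $\det H_p(\bar\va,t)\ne 0$, which proves (2). Part (1) follows by taking $U=\bigcap_k U_k$ and $B=\max_k B_k$, since every large $p$ lies in some class $k\in(\Z/d\Z)^\times$.

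\textbf{Main obstacle.} The hard step is the second one: showing that the chosen extremal coefficient of $\det H_p$ really stabilizes modulo $p$ to a fixed polynomial $H_k$ times a unit. This means tracking which permutations in the determinant expansion contribute to the extremal $t$-degree, a combinatorial problem about the residues $pi-j$ modulo $d$. It also means showing that no cancellation forces $H_k\equiv 0$. If the top-$t$-degree term degenerates for some $k$, I would fall back on the lowest-$t$-degree term, or on a face of the Newton polytope in $(\va,t)$ chosen via the permutation $i\mapsto ki\bmod d$.
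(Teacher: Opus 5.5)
Your overall architecture matches the paper's. You take the top $t$-degree coefficient of the Hasse--Witt determinant and show that modulo $p$ it is a $p$-adic unit times a polynomial depending only on $k\equiv p\pmod d$. You let $U_k$ be the nonvanishing locus of that polynomial and finish with the norm argument. Your $k=1$ check at $\va=0$ is also fine. But both substantive steps are left open.

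The first step, which you call the main obstacle, is actually cheap and needs no permutation tracking. Put $m=(p-1)/2$, let $s_i\equiv ki\pmod d$ with $|s_i|\le g$, and set $c_i=(pi-s_i)/d$. Every entry $[x^{pi-j}](\eta(x)+t)^m$ of row $i$, where $\eta=f_{\va,t}-t$, has $t$-degree at most $m-c_i$, because $\deg\eta^q=dq<pi-j$ for $q<c_i$. So the top coefficient of the determinant is just the determinant of the row-wise leading coefficients $\binom{m}{c_i}[x^{pi-j}]\eta^{c_i}$. Passing to the reversed polynomial $\eta^*(z)=z^d\eta(1/z)$ turns $[x^{pi-j}]\eta^{c_i}$ into $[z^{j-s_i}]\eta^*(z)^{c_i}$. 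This is a polynomial in $c_i$ with $p$-integral coefficients, since only binomials $\binom{c_i}{r}$ with $r\le j-s_i\le d-1$ occur. Moreover $c_i\equiv -s_i/d\pmod p$. Note that the limiting exponent is $-s_i/d$, which varies with the row, not a single value $-k/d$.

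The genuine gap is the nonvanishing of $H_k$ for $k\ne1$, and your proposed check fails. Citing \cite{Zhu26} for the Katz vector does not help: generic ordinarity only says $\det H_p(\bar\va,t)\ne0$, not that its top $t$-coefficient is nonzero. The literal computation actually gives zero. On the line $\va=(a_1,0,\ldots,0)$ one has $\eta^{c_i}=\sum_r\binom{c_i}{r}a_1^rx^{dc_i-(d-1)r}$. For $k\equiv-1$ one has $s_i=-i$, so $[x^{pi-j}]\eta^{c_i}\ne0$ forces $(d-1)r=i+j$, i.e.\ $i=j=g$. Hence for $g\ge2$ the leading-coefficient matrix has a single nonzero entry, and $H_{-1}$ vanishes identically on that line. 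For instance, with $d=5$ and $p=19$, row $1$ asks for $x^{18}$ and $x^{17}$ in $(x^5+a_1x)^4$, which has only the exponents $20,16,12,8,4$.

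What you are missing is the paper's Key Lemma. With $b_r=A_{d-r}$, the entries are $C_{j-s_i}(-s_i/d)$, where $C_n(\gamma)=\gamma b_n+R(b_1,\ldots,b_{n-1})$. Reorder the rows so that row $i$ has $|s|=i$.
\begin{itemize}
\item A row with $s=+i$ is then $(0,\ldots,0,1,\ast,\ldots,\ast)$, with the $1$ on the diagonal.
\item In a row with $s=-i$, the variable $b_{2i}$ occurs within the leading $i\times i$ block only in the diagonal entry, with coefficient $i/d$.
\end{itemize}
Induction on $i$ then shows that the monomial $\prod_{i\in I}A_{d-2i}$, where $I=\{i:-i\in\{s_1,\ldots,s_g\}\}$, has coefficient $\pm\prod_{i\in I}i/d\ne0$. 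This coefficient is even a unit modulo every $p>d$. Without this or an equivalent argument, $U_k$ is not even shown to be nonempty. Your fallback to the lowest $t$-degree term or a Newton-polytope face has no support either.
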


We also prove a uniform result over the coefficient space:
for every prime $p\ge 2d-1$,
the locus of generically ordinary one-parameter hyperelliptic families
in the coefficient space $\A_{\F_p}^{d-1}$
is Zariski-open and dense.

\begin{theorem}\label{T:main2}
Let $p\ge 2d-1$ be a prime.
Let $\GO_p$ be the locus of generically ordinary one-parameter
hyperelliptic families $\cC_{\va,\F_p}\rightarrow\cS_{\va,\F_p}^\circ$
in the coefficient space $\A_{\F_p}^{d-1}$.
Let $k_p\in(\Z/d\Z)^\times$ denote the residue class of $p\pmod d$.
Then
\[
\GO_p\supseteq U_{k_p,\F_p}\supseteq U_{\F_p}
\]
and $\GO_p$ is a Zariski-open dense subset of $\A_{\F_p}^{d-1}$.
In particular, for every $\va\in U_{k_p,\F_p}(\bar\F_p)$,
the family
$\cC_{\va,\bar\F_p}\longrightarrow \cS_{\va,\bar\F_p}^\circ$
is generically ordinary.
\end{theorem}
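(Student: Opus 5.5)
The plan is to make generic ordinarity an explicit nonvanishing condition on a polynomial in $(\va,t)$ modulo $p$, and then use the construction of $U_k$ (deferred to later in the paper) to see that this condition holds on $U_{k,\F_p}$.

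\textbf{Step 1: ordinarity as a determinant.} By the Cartier--Manin (Hasse--Witt) criterion, the fiber over $t$ is ordinary iff the $g\times g$ Hasse--Witt matrix
\[
H_{\va}(t)=\bigl(c_{ip-j}\bigr)_{1\le i,j\le g}
\]
has nonzero determinant. Here $c_m$ is the coefficient of $x^m$ in $f_{\va,t}(x)^{(p-1)/2}$. Each entry is a polynomial in $(\va,t)$ with $\F_p$-coefficients. So $D_p(\va,t)\coloneqq\det H_\va(t)\in\F_p[\va,t]$ exists, and the family $\cC_{\va}$ is generically ordinary iff $D_p(\va,t)\ne0$ in $\bar\F_p[t]$. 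Since $\disc_x f_{\va,t}\ne 0$ whenever $p\nmid d$, the generic fiber is smooth.

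\textbf{Step 2: openness.} $\GO_p$ is the complement in $\A^{d-1}_{\F_p}$ of the common zero locus of the $t$-coefficients of $D_p$. It is therefore Zariski-open, and it remains to show it is nonempty (hence dense, since the affine space is irreducible).

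\textbf{Step 3: density via $U_{k_p}$.} I expect $U_k$ to be constructed as the nonvanishing locus of a leading-type coefficient. The idea is to take the highest (or lowest) $t$-degree part of $D_p$ and show it is, up to a unit in $\F_p$, the reduction of a polynomial $P_k(\va)\in\Z[\va]$ that depends only on $k=p\bmod d$. Then $U_k=\{P_k\ne0\}$ plus finitely many bad primes.

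Concretely, I would use the weighting $\deg x=1$, $\deg t=d$, $\deg a_i=d-i$, which makes $f$ homogeneous. Then $c_{ip-j}$ is homogeneous, and within it the power of $t$ is determined by the $a$-monomials. Writing $ip-j=d\cdot q+r$ with $r$ depending only on $i,j,k$, the extreme $t$-power term of each entry is an explicit multinomial coefficient times an $a$-monomial of bounded length. These coefficients are products of factorials of numbers $<p$, hence $p$-adic units. The hypothesis $p\ge 2d-1$ should be exactly what guarantees that the relevant indices stay below $p$ and that none of these multinomial coefficients vanish mod $p$, i.e.\ that the reduction of $P_k$ equals the corresponding coefficient of $D_p$ up to a unit. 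Then $U_{k_p,\F_p}\subseteq\GO_p$, and $U_{k_p,\F_p}\ne\emptyset$ because $P_k\not\equiv0$. $U_{\F_p}\subseteq U_{k_p,\F_p}$ holds by definition of $U$ as an intersection.

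\textbf{Main obstacle.} The hard step is showing that this extreme coefficient is a nonzero polynomial whose form is independent of $p$ within the class $k$, and that it does not vanish identically mod $p$ for every $p\ge2d-1$. For a $g\times g$ determinant there can be cancellation among permutations. I would first try to exhibit a monomial in $\va$ that arises from a unique permutation (say the identity, using a lexicographic or weight-extremal choice of which $a_i$'s to use in each row) with coefficient a product of multinomials with all parts $<p$. This would rule out cancellation and nonvanishing mod $p$ simultaneously. If that fails, I would fall back on specializing $\va$ to a point such as $(0,\dots,0,a_{d-1})$ or the Katz-type $(-d,0,\dots,0)$, where the determinant can be evaluated directly.
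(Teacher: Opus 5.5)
Your Steps 1 and 2 agree with the paper (the determinant $H_p(\vA,t)$, and openness of $\GO_p=\bigcup_r D(\bar h_r)$). Step 3 is the paper's strategy in outline, but everything that actually proves $\GO_p\supseteq U_{k_p,\F_p}$ is left as an expectation, and several of your concrete claims are wrong.

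First, the top coefficient must be taken at a bound that is uniform along each row, not entry by entry. Put $\eta=f_{\vA,t}-t$ (no constant term), let $s_i\equiv ki$ lie in $\{\pm1,\dots,\pm g\}$, and set $c_i=(pi-s_i)/d$. Then $[x^{pi-j}]\eta(x)^q=0$ for $q<c_i$, so every entry of row $i$ has $t$-degree at most $m-c_i$. Its $t^{m-c_i}$-coefficient is $\binom{m}{c_i}[z^{j-s_i}]\eta^*(z)^{c_i}$, which is zero if $j<s_i$, and $h_E$ is the determinant of these. That coefficient is a sum over all monomials of weight $j-s_i$, not a multinomial coefficient times one monomial. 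Its multinomial coefficients can also vanish: for $d=5$, $p=13$ one has $c_1=3$, and the $b_1^4$-term of $[z^4]B(z)^3$ has coefficient $\binom{3}{4}=0$. The role of $p\ge 2d-1$ is to give $1\le c_i\le m<p$, so that $\binom{m}{c_i}$ is a $p$-adic unit.

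Second, the step you flag as the main obstacle needs an idea you do not supply. The integer $c_i$ grows with $p$. What makes the top coefficient depend only on $k$ is that $C_n(X)=\sum_{r\le n}\binom{X}{r}[z^n](B(z)-1)^r$ is a polynomial in $X$ with $p$-integral coefficients for $n\le d-1<p$, together with $c_i\equiv -s_i/d\pmod p$. This gives $\bar h_E=\bar u_p\,\bar\Delta_k$ with $\Delta_k=\iota\det\bigl(C_{j-s_i}(-s_i/d)\bigr)$, which is exactly what $U_k=D(\Delta_k^\Z)$ is built from.

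Third, the nonvanishing of $\bar\Delta_k$ for every $p>d$ needs an actual argument, and you only propose to search for one. The paper's argument runs as follows. Since $\{|s_i|\}=\{1,\dots,g\}$, reorder the rows so that the $i$-th has $s=\delta_i i$. A row with $\delta_i=+1$ is $(0,\dots,0,1,*,\dots,*)$ with the $1$ on the diagonal. A row with $\delta_i=-1$ has entries $C_{j+i}(i/d)=\frac{i}{d}b_{j+i}+(\text{terms in }b_1,\dots,b_{j+i-1})$. Hence, when $\delta_n=-1$, $b_{2n}$ occurs only in the $(n,n)$ entry of the leading $n\times n$ block. By induction on $n$, the monomial $\prod_{\delta_i=-1}b_{2i}$ has coefficient $\prod_{\delta_i=-1}i/d$, a unit for $p>d$.

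Your fallback at the Katz point fails. There $B(z)=1-dz^{d-1}$, so $C_n=0$ for $1\le n\le d-2$, and every row with $s_i\in\{-1,\dots,-(g-1)\}$ vanishes identically. This happens for every $k\ne1$ once $g\ge3$ (and for $k=2,4$ when $d=5$), so for those $k$ the leading coefficient vanishes at $(-d,0,\dots,0)$. Finally, a single point where the full Hasse--Witt determinant is nonzero would only prove density of $\GO_p$. It would not give the stated inclusion $\GO_p\supseteq U_{k_p,\F_p}$, which requires the identification $\bar h_E=\bar\nu_p\,\bar\Delta^\Z_{k_p}$.
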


For an arbitrary vector $\va$ in the coefficient space,
we have the following stronger result for
primes $p\equiv 1\pmod d$.

\begin{theorem}\label{T:main3}
Let $d=2g+1$ with $g\ge 2$.
\begin{enumerate}
\item Let $\va=(a_1,\ldots,a_{d-1})\in\bar\Z^{d-1}$ be an arbitrary coefficient vector.
Let $L$ be a finite extension of $\Q(\va)$ and let
$\cC_{\va,\cO_L}\longrightarrow \cS_{\va,\cO_L}^\circ$
denote the base change to $\cO_L$.
Then for every nonzero prime ideal $\fP\subset \cO_L$ whose residue characteristic $p$ satisfies $p\equiv 1\pmod d$,
the reduction of $\cC_{\va,\cO_L}\longrightarrow\cS_{\va,\cO_L}^\circ$
modulo $\fP$ is generically ordinary.

\item For every prime $p\equiv 1\pmod d$
and every coefficient vector $\va\in\bar\F_p^{d-1}$,
the corresponding hyperelliptic family
is generically ordinary.
That is, $\GO_p=\A_{\F_p}^{d-1}$.
\end{enumerate}
\end{theorem}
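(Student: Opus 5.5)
The plan is to prove part (2) first and deduce part (1) from it by reduction. Throughout, $p\equiv 1\pmod d$ forces $p>d\ge 5$, so $p$ is odd and $d$ is invertible mod $p$. Hence for every $\va\in\bar\F_p^{d-1}$ the polynomial $f_{\va,t}$ is square-free over $\bar\F_p(t)$, and the generic fiber $y^2=f_{\va,t}(x)$ is a smooth hyperelliptic curve of genus $g$ over $\bar\F_p(t)$. By the standard criterion of Manin and Yui, this curve is ordinary if and only if its Hasse--Witt (Cartier--Manin) matrix is invertible. With $n=(p-1)/2$, this is the $g\times g$ matrix
\[
H_{\va}(t)=\bigl(c_{pi-j}\bigr)_{1\le i,j\le g},
\]
where $c_m$ is the coefficient of $x^m$ in $f_{\va,t}(x)^n$. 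Since ordinarity of the generic fiber is what ``generically ordinary'' means, it suffices to show that $\det H_\va(t)\in\bar\F_p[t]$ is not the zero polynomial.

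To make this uniform in $\va$, treat $a_1,\dots,a_{d-1},t$ as indeterminates and work in $\F_p[a_1,\dots,a_{d-1},t]$. Give $x$ weight $1$, $t$ weight $d$, and $a_k$ weight $d-k$. Then $f$ is weighted homogeneous of degree $d$, so $c_m$ is weighted homogeneous of degree $dn-m$. Consequently $D:=\det H$ is a polynomial in $(\va,t)$.

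The key step is to isolate the part of $D$ involving no $a_k$, which is obtained by setting $\va=0$, i.e.\ from the curve $y^2=x^d+t$. Here $c_m\neq 0$ only when $d\mid m$. Because $p\equiv1\pmod d$ we have $pi-j\equiv i-j\pmod d$, and since $1\le i,j\le g<d$ this forces $i=j$. So $H_0(t)$ is diagonal, with entries
\[
\binom{n}{i(p-1)/d}\,t^{\,n-i(p-1)/d}.
\]
These exponents make sense because $i(p-1)/d\le g(p-1)/d<n$. Each binomial coefficient is nonzero mod $p$ since $n<p$. Therefore $D(0,t)=c\,t^N$ with $c\in\F_p^\times$.

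It follows that the monomial $c\,t^N$ occurs in $D(\va,t)$ with no $a$-variables. Any specialization $\va\mapsto\va_0\in\bar\F_p^{d-1}$ sends every other monomial of $D$ to a multiple of a power of $t$ weighted by factors coming from the $a$'s; these can contribute to $t^N$, so the argument needs a little care. The clean way to handle this is to use the weighting. For fixed $\va_0$, substitute $a_k\mapsto a_{0,k}s^{d-k}$, $t\mapsto t$, and compare leading behaviour in $t$ as a Laurent/Puiseux limit. Equivalently, $D(\va_0,\lambda^d t)=\lambda^{\deg}D(\lambda^{-(d-k)}\cdot\va_0\ \text{scaled},t)$. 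Letting the scaling push $\va\to0$ shows that the top $t$-degree part of $D(\va_0,t)$ equals that of $D(0,t)$: every monomial $a^{\alpha}t^e$ of $D$ has weight $\sum\alpha_k(d-k)+de$ equal to the common weight of $D$, so any monomial with $\alpha\ne0$ has $e<N$. Hence the coefficient of $t^N$ in $D(\va_0,t)$ is exactly $c\ne0$, which gives $\det H_{\va_0}(t)\ne 0$. This weighted-degree bookkeeping is the only real point and the step to check carefully; everything else is formal. This proves $\GO_p=\A^{d-1}_{\F_p}$.

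For part (1), let $\fP\subset\cO_L$ have residue characteristic $p\equiv1\pmod d$. The reduction of $\cC_{\va,\cO_L}$ modulo $\fP$ is the family $y^2=f_{\bar\va,t}(x)$ over $(\cO_L/\fP)(t)$, where $\bar\va$ is the image of $\va$ (recall $\va\in\bar\Z^{d-1}$, so $\va$ is integral). This reduction is smooth over the generic point, since $p\nmid 2d$. By part (2) applied to $\bar\va\in\bar\F_p^{d-1}$, it is generically ordinary.
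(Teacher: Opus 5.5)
Your proof is correct, but it reaches the key fact by a genuinely different route than the paper. The key fact is that the top $t$-coefficient of $\det\bigl(c_{pi-j}\bigr)$ is a nonzero constant independent of $\va$.

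\textbf{Your route.} You use a single global grading: weight $1$ on $x$, $d$ on $t$, $d-k$ on $a_k$. Every Leibniz term $\prod_i c_{pi-\sigma(i)}$ then has weight $\sum_i(dn-pi+\sigma(i))=dN$, with $N=g^2(p-1)/(2d)$. Hence any monomial involving some $a_k$ has $t$-exponent strictly less than $N$. So $[t^N]D$ can be read off from $y^2=x^d+t$, whose Hasse--Witt matrix is diagonal when $p\equiv 1\pmod d$.

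\textbf{The paper's route.} It works row by row. It shows $\deg_t M_{ij}\le m-c_i$ with $c_i=i(p-1)/d$. Through the reciprocal polynomial it identifies $[t^{m-c_i}]M_{ij}=\binom{m}{c_i}\iota\bigl(C_{j-i}(c_i)\bigr)$. It then notes that this leading matrix is upper triangular with diagonal $\binom{m}{c_i}$, so $h_E=u_p$ with $E=N$. The entries above the diagonal do involve $\vA$. Your grading in fact recovers this triangular shape, since entry $(i,j)$ has weight $dn-pi+j$, but you never need it.

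\textbf{What each buys.} Your argument is shorter and independent of the coefficient functions $C_n$. The paper's argument is uniform in $k$. For $k\ne 1$ some $s_i<0$, and the corresponding rows of the Hasse--Witt matrix of $y^2=x^d+t$ vanish. So $D(0,t)=0$, and the genuine leading coefficient $\nu_p\Delta_k^\Z(\vA)$ consists of monomials of positive $\vA$-weight. A reduction to $\va=0$ cannot detect this, and that identification is what the other main theorems require.

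\textbf{Presentation.} Delete the garbled scaling sentence. Also delete the remark that other monomials ``can contribute to $t^N$''; your own weight count shows they cannot. State the Leibniz-term weight computation explicitly, since the argument uses weighted homogeneity of $D$ itself, not just of its entries.
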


A direct consequence of Theorem~\ref{T:main} is the following density statement
for ordinary primes, proved in Section~\ref{S:6}.
 
\begin{corollary}\label{C:generic-density}
Let $\va\in U_\Q(\bar\Q)\cap\bar\Z^{d-1}$, put
$F=\Q(\va)$, and let $C_\va/F(t)$ be the hyperelliptic curve with affine
equation $y^2=f_{\va,t}(x)$. Then the set of ordinary primes of
$C_\va/F(t)$ has natural density $1$.
\end{corollary}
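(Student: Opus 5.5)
We deduce the corollary from Theorem~\ref{T:main}(1). The plan is to translate "generically ordinary reduction of the family modulo $\fp$" into "the curve $C_\va/F(t)$ has ordinary reduction at $\fp$", and then to note that the remaining primes form a finite set.

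\emph{Step 1: fix the convention.} A prime $\fp\subset\cO_F$ with $\fp\nmid 2d$ (and with $\fp$ not dividing the leading coefficient $\pm d^d$ of $N_\va(t)$) is an ordinary prime of $C_\va/F(t)$ when the following holds. The reduction $C_{\va,\fp}$ over $k_\fp(t)$ is a smooth hyperelliptic curve, and its Jacobian has $p$-rank $g$. The $p$-rank is computed on the generic fiber over $k_\fp(t)$, equivalently over $\bar k_\fp(t)$. For $\fp\nmid 2d$, the reduction of $N_\va(t)$ modulo $\fp$ is a nonzero polynomial, so the reduction of $\cS_\va^\circ$ modulo $\fp$ is a dense open subset of the $t$-line over $k_\fp$. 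Hence the reduction of $\cC_\va$ modulo $\fp$ is a smooth family whose generic fiber is exactly $C_{\va,\fp}/k_\fp(t)$.

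\emph{Step 2: equivalence.} The family is generically ordinary if and only if its generic fiber is ordinary. This uses lower semicontinuity of the $p$-rank, or equivalently the fact that the Hasse--Witt (Cartier--Manin) matrix $H(t)$ has entries in $k_\fp[t]$ and $\det H(t)\not\equiv 0$. So Theorem~\ref{T:main}(1) gives: every $\fp$ with residue characteristic $p>\max(B(d,\va),2d)$ is an ordinary prime of $C_\va/F(t)$.

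\emph{Step 3: density.} The exceptional primes lie above the finitely many rational primes $p\le\max(B(d,\va),2d)$, and each such $p$ has at most $[F:\Q]$ primes above it. Thus the non-ordinary primes form a finite set. A finite set has natural density $0$ with respect to counting primes of $\cO_F$ by norm, since the total count tends to infinity by the prime ideal theorem. Therefore the ordinary primes have natural density $1$.

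The only real care point is Step 2, specifically matching the notion "generically ordinary family" with "ordinary generic fiber". I would settle this through the Cartier--Manin matrix: its determinant is a polynomial in $t$, so it is nonzero at the generic point exactly when it is nonzero at some, and then all but finitely many, closed points.
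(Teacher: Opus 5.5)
Your proposal is correct and follows the paper's proof: the paper also applies Theorem~\ref{T:main}(1) to conclude that every prime above a rational prime $p>B(d,\va)$ is ordinary, so the non-ordinary primes lie in a finite set and $\Ord(C_\va/F(t))$ has density $1$. Your Step~2 is more than is needed, since the paper defines $\fp$ to be ordinary precisely when the reduced family is generically ordinary, and $B(d,\va)\ge 2d-2$ already ensures $p\nmid 2d$; it does no harm, though.
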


We next study the density of ordinary primes of
$C_\va/\Q(\va)(t)$ for an arbitrary integral coefficient vector $\va$.

\begin{theorem}\label{T:density}
Let $d=2g+1$ with $g\ge 2$. Let $\zeta_d$ denote a primitive
$d$-th root of unity in $\bar\Q$.
Let $\va=(a_1,\ldots,a_{d-1})\in{\bar\Z}^{d-1}$, put $F=\Q(\va)$, and
$L=F(\zeta_d)$.
Let $C$ be the hyperelliptic curve
given by the affine equation
\[
y^2=f_{\va,t}(x)
\]
over $F(t)$.
Then the set of ordinary primes of $C/F(t)$
has natural lower density at least $\frac{1}{[L:F]}$.
If $C_L\coloneqq C\times_{F(t)} L(t)$,
then the set of ordinary primes of $C_L/L(t)$ has
natural density~$1$.
\end{theorem}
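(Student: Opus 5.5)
The plan is to deduce Theorem~\ref{T:density} from Theorem~\ref{T:main3}(1) together with the Chebotarev density theorem applied to the cyclotomic extension $L=F(\zeta_d)$ of $F$.

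First fix the meaning of ``ordinary prime of $C/F(t)$''. It is a nonzero prime $\fp\subset\cO_F$, outside the finite set dividing $2$ or where the model $\cC_\va\to\cS_\va^\circ$ degenerates, such that the reduction modulo $\fp$ is generically ordinary. Density is measured among primes of $F$ ordered by norm. For the second statement the same notion is used over $L$, with the base-changed family $\cC_{\va,\cO_L}$.

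\textbf{Step 1: over $L$.} Let $\fP\subset\cO_L$ be a prime with residue characteristic $p\nmid d$ and residue degree $1$ over $\Q$. Since $\zeta_d\in L$, the residue field $\cO_L/\fP=\F_p$ contains a primitive $d$-th root of unity, so $p\equiv1\pmod d$. Theorem~\ref{T:main3}(1), applied with this $L$, then says the reduction of $\cC_{\va,\cO_L}$ modulo $\fP$ is generically ordinary. Primes of $L$ of residue degree $\ge2$ over $\Q$ have natural density $0$, because there are $O(\sqrt{x})$ of them with norm up to $x$. Ramified primes are finite in number. Hence the ordinary primes of $C_L/L(t)$ have density $1$.

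Alternatively one can use that the norm $N\fP$ always satisfies $N\fP\equiv1\pmod d$. The cleanest form of Theorem~\ref{T:main3} needs the residue characteristic itself to be $1 \bmod d$, so I would phrase the argument via degree-one primes to be safe.

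\textbf{Step 2: over $F$.} Consider the primes $\fp\subset\cO_F$ that split completely in $L/F$ and have degree $1$ over $\Q$. For such $\fp$, with $p\nmid d$, the Frobenius at $\fp$ in $\Gal(L/F)\hookrightarrow(\Z/d\Z)^\times$ is $p\bmod d$. Complete splitting therefore forces $p\equiv1\pmod d$. By Theorem~\ref{T:main3}(1), applied with $L=F$, the reduction modulo $\fp$ is generically ordinary.

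By Chebotarev, the primes of $F$ splitting completely in $L$ have natural density $1/[L:F]$. Removing those of degree $\ge2$ over $\Q$ (a density-$0$ set) does not change this. Thus the ordinary primes of $C/F(t)$ contain a set of natural density $1/[L:F]$, which gives the asserted lower bound on the lower natural density.

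\textbf{Main obstacle.} The delicate point is the bookkeeping between ``residue characteristic $p\equiv1\pmod d$'' in Theorem~\ref{T:main3} and Frobenius or norm conditions. Primes of $F$ of higher residue degree can split completely in $L$ without $p\equiv1\pmod d$. This is handled by discarding them as a density-zero set, so the density statements survive.

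A secondary point is matching the exceptional primes, such as those dividing $2d$ or the leading coefficient of the discriminant $N_\va$. These are finitely many and do not affect natural density.
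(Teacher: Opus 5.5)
Your proof is correct and follows essentially the paper's route. Part (2) is argued exactly as in the paper: a degree-one prime of $L$ with $p\nmid d$ forces $p\equiv 1\pmod d$, so Theorem~\ref{T:main3} applies, and primes of higher degree have density $0$. Part (1) likewise applies Chebotarev to degree-one primes of $F$; the only difference is that the paper runs this for every class $k$ with $\Delta_k^\Z(\va)\ne 0$ via Theorem~\ref{T:main}(2), obtaining the sharper bound $|\cK_\va|/[L:F]$, whereas you use only the trivial class $k=1$ (where $\Delta_1=1$), which suffices for the stated bound $1/[L:F]$.
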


Our main strategy uses a universal Hasse--Witt determinant polynomial
$H_p(\vA,t)$ in $\Z[\vA,t]$, as in \cite{Zhu26}.
For a fixed prime $p$, the family
$\cC_{\va,\F_p}\rightarrow \cS_{\va,\F_p}^\circ$
is generically ordinary if and only if
$\bar{H}_p(\va,t)\ne 0$ in $\bar\F_p[t]$.
Fix $k\in(\Z/d\Z)^\times$ and a prime $p\equiv k\pmod d$.
Viewing $H_p(\vA,t)$ as a polynomial in $t$
with coefficients in $\Z[\vA]$,
we identify its leading coefficient.
More precisely,
if $p\ge 2d-1$, then, for an explicitly defined integer $E$, the
leading coefficient is
\[
[t^E]H_p(\vA,t)\equiv \nu_p \Delta_k^\Z(\vA)\pmod p,
\]
where $\nu_p$ is a $p$-adic unit and
$\Delta_k^\Z(\vA)$
is a nonzero determinant polynomial in $\Z[\vA]$ depending only on $k$.

In Section~\ref{S:2}, we establish the required properties of
the determinant $D_\bdelta$.
In Section~\ref{S:3}, we construct
$\Delta_k^\Z$ in $\Z[\vA]$, and the corresponding Zariski-open subschemes.
In Section~\ref{S:4},
we identify the leading coefficient of $H_p$ with $\nu_p\Delta_k^{\Z}$.
In Section~\ref{S:5},
we prove Theorems~\ref{T:main}, \ref{T:main2}, and~\ref{T:main3}.
In Section~\ref{S:6}, we prove
Corollary~\ref{C:generic-density} and Theorem~\ref{T:density}.

\section{The determinant polynomial}
\label{S:2}

The goal of this section is to introduce coefficient functions
$C_n(X)$ and prove Proposition~\ref{P:nonzero}.
For a sign vector $\bdelta$ and
an integer
$1\le n\le g$,
we define in \eqref{E:D(n)}
an $n\times n$ matrix $\cD_\bdelta(n)$ and its determinant $D_\bdelta(n)$.
We prove in Proposition~\ref{P:nonzero}
that $D_\bdelta=D_{\bdelta}(g)$ is nonzero and
exhibit a common denominator for its coefficients whose prime
divisors are all at most $d$.
In Sections~\ref{S:3} and~\ref{S:4}, we relate the determinant
$D_\bdelta$ to a distinguished coefficient
of the universal Hasse--Witt determinant polynomial.

\subsection{Coefficient functions}

We introduce the coefficient functions $C_n(X)$,
which will be the building blocks
for the matrix $\cD_\bdelta$ and the determinant polynomials $\Delta_k$.

Let $b_1,\ldots,b_{d-1}$ be variables in this section.
Write
\begin{equation}\label{E:B(z)}
B(z) =
1+b_1z+\cdots+b_{d-1}z^{d-1}.
\end{equation}
For any $n\in\Z$ and a variable $X$,
define $C_n(X)$ as follows.
If $n\ge 0$, let
\begin{equation}\label{E:C_n}
\boxed{C_n(X)
\coloneqq \sum_{r=0}^n
\binom{X}{r}[z^n](B(z)-1)^r.}
\end{equation}
For $n<0$, set
$C_n(X)\coloneqq 0$.
For any $\gamma\in\Q$, $C_n(\gamma)\in \Q[\vb]$,
and $C_0(\gamma)=1$.

\begin{lemma}\label{L:C_n}
Let $\gamma\in\Q$.
\begin{enumerate}
\item For any $n\in\Z$ we have
$C_n(\gamma)=[z^n]B(z)^\gamma$, where $B(z)^\gamma$ is interpreted
via its formal binomial expansion in $\Q[\vb][[z]]$.
\item Let $p$ be a prime and let $n\in\Z$ such that $n<p$.
Then $C_n(X)\in\Z_{(p)}[\vb][X]$.
If $\gamma,\gamma'\in\Z_{(p)}$ satisfy $\gamma\equiv \gamma'\pmod p$,
then
\[
C_n(\gamma)\equiv C_n(\gamma') \pmod p.
\]
\item Let $1\le n\le d-1$. Then
\[
C_n(\gamma)=\gamma b_n+R_{n,\gamma}
\]
where $R_{n,\gamma}\in\Q[b_1,\ldots,b_{n-1}]$.
In particular, $C_n(\gamma)\in\Q[b_1,\ldots,b_n]$ has degree at most one in $b_n$.
\end{enumerate}
\end{lemma}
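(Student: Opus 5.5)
Write $B(z)=1+u(z)$ with $u(z)\coloneqq B(z)-1=b_1z+\cdots+b_{d-1}z^{d-1}$, so $u$ has no constant term. Everything will follow from this one observation.

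For part (1) we expand formally,
\[
B(z)^\gamma=(1+u)^\gamma=\sum_{r\ge0}\binom{\gamma}{r}u^r .
\]
This sum makes sense in $\Q[\vb][[z]]$ because $u^r\in z^r\Q[\vb][[z]]$. For $n\ge0$, only the terms with $r\le n$ contribute to $[z^n]$, so $[z^n]B(z)^\gamma=\sum_{r=0}^n\binom{\gamma}{r}[z^n]u^r=C_n(\gamma)$. For $n<0$ both sides are $0$. The only care needed is to say that the binomial series is the definition of $B(z)^\gamma$.

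For part (2), if $n<0$ there is nothing to prove, so take $0\le n<p$. Then $C_n(X)$ is a $\Z[\vb]$-linear combination of the polynomials $\binom{X}{r}=X(X-1)\cdots(X-r+1)/r!$ with $r\le n<p$. Since $r!$ is a $p$-adic unit for these $r$, each $\binom{X}{r}$ lies in $\Z_{(p)}[X]$, and the coefficients $[z^n]u^r$ lie in $\Z[\vb]$. Hence $C_n(X)\in\Z_{(p)}[\vb][X]$. For the congruence, take any polynomial $P\in\Z_{(p)}[\vb][X]$ and $\gamma\equiv\gamma'\pmod p$ in $\Z_{(p)}$. Then $X-Y$ divides $P(X)-P(Y)$ in $\Z_{(p)}[\vb][X,Y]$, so $P(\gamma)-P(\gamma')\in(\gamma-\gamma')\Z_{(p)}[\vb]\subseteq p\Z_{(p)}[\vb]$.

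For part (3), fix $1\le n\le d-1$. We sort the terms of $C_n(\gamma)$ by $r$.
\begin{itemize}
\item The term $r=0$ contributes $[z^n]1=0$.
\item The term $r=1$ contributes $\gamma[z^n]u=\gamma b_n$.
\item For $r\ge2$, $[z^n]u^r$ is a sum of products $b_{i_1}\cdots b_{i_r}$ with $i_1+\cdots+i_r=n$ and every $i_j\ge1$. Each such index satisfies $i_j\le n-(r-1)\le n-1$, so these terms lie in $\Q[b_1,\ldots,b_{n-1}]$.
\end{itemize}
Setting $R_{n,\gamma}=\sum_{r=2}^n\binom{\gamma}{r}[z^n]u^r$ gives $C_n(\gamma)=\gamma b_n+R_{n,\gamma}$. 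This shows $C_n(\gamma)\in\Q[b_1,\ldots,b_n]$ with degree at most one in $b_n$.

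No step is a real obstacle. The one point to get right is in (2): it is the hypothesis $n<p$ that keeps the denominators $r!$ prime to $p$.
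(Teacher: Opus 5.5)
Your proof is correct and follows essentially the same route as the paper: the binomial expansion of $(1+u)^\gamma$ for (1), the $p$-integrality of $\binom{X}{r}$ for $r\le n<p$ for (2), and isolating the $r=1$ term for (3). Your index-bound argument in (3) is the paper's exponent-vector argument in other words. Your observation that $(X-Y)\mid P(X)-P(Y)$ also makes explicit the congruence step in (2), which the paper only asserts.
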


\begin{proof}
(1) This follows from the definition above.\\
(2)
If $n<0$, then
$C_n(\gamma)=C_n(\gamma')=0$, so
the assertions are immediate.
Suppose $0\le n<p$.
Since $r!\in\Z_{(p)}^\times$ for $0\le r\le n<p$, we have
$\binom{X}{r}
=
\frac{X (X-1)\cdots (X-r+1)}{r!}\in\Z_{(p)}[X]$,
hence $C_n(X)\in\Z_{(p)}[\vb][X]$.
As $C_n(X)$ is a polynomial in
$(\Z_{(p)}[\vb])[X]$, we have
$C_n(\gamma), C_n(\gamma')\in\Z_{(p)}[\vb]$.
Reducing coefficients modulo $p$, we have
\[
C_n(\gamma)\equiv C_n(\gamma')\pmod p.
\]

(3)
For any $r\in\Z_{\ge0}$,
let $\cT_n(r)$ be the set of all
$\vq=(q_1,\ldots,q_{d-1})\in\Z_{\ge 0}^{d-1}$ such that
\[
q_1+\cdots+q_{d-1}=r, \qquad q_1+2q_2+\cdots+(d-1)q_{d-1}=n.
\]
Then
\[
\cT_n(r)=\{(q_1,\ldots,q_n,0,\ldots,0)\in\Z_{\ge 0}^{d-1}:
q_1+\cdots+q_n=r, q_1+2q_2+\cdots+nq_n=n.
\}.
\]
Notice that $\cT_n(1)=\{e_n\}$, where $e_n$ is the vector
with $q_n=1$ and every other coordinate equal to zero.
For $r>1$, every element of $\cT_n(r)$ has $q_n=0$.
Then
\begin{align*}
C_n(\gamma)
&=\sum_{r=0}^n\binom{\gamma}{r} [z^n](b_1z+\cdots+b_{d-1}z^{d-1})^r\\
&=\sum_{r=0}^n \binom{\gamma}{r}\sum_{\vq\in \cT_n(r)}
\binom{r}{\vq}b_1^{q_1}\cdots b_n^{q_n},
\end{align*}
where $\binom{r}{\vq}=r!/(q_1!\cdots q_{d-1}!)$. The term
corresponding to $r=1$ and $\vq=e_n$ is
$\binom{\gamma}{1}b_n=\gamma b_n$, and all other terms are independent
of $b_n$.
This proves the lemma.
\end{proof}

\begin{lemma}\label{L:Z_p}
Let $\alpha\coloneqq d^{d-1}(d-1)!$, and let $0\le n\le d-1$.
If $\gamma=\frac{\ell}{d}$ for some $-g\le \ell\le g$,
then $\alpha\cdot C_n(\gamma)\in \Z[\vb]$.
Moreover, if $p>d$ is prime, then $
C_n(\gamma)\in\Z_{(p)}[\vb].
$
\end{lemma}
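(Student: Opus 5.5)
We expand $C_n(\gamma)$ via the definition \eqref{E:C_n}, as in the proof of Lemma~\ref{L:C_n}(3):
\[
C_n(\gamma)=\sum_{r=0}^n\binom{\gamma}{r}\sum_{\vq\in\cT_n(r)}\binom{r}{\vq}\,b_1^{q_1}\cdots b_{d-1}^{q_{d-1}}.
\]
The multinomial coefficients $\binom{r}{\vq}$ are integers. So it suffices to show that $\alpha\binom{\gamma}{r}\in\Z$ for every $0\le r\le n\le d-1$, where $\gamma=\ell/d$ and $|\ell|\le g$.

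The key step is to write
\[
\binom{\gamma}{r}=\frac{\gamma(\gamma-1)\cdots(\gamma-r+1)}{r!}=\frac{\ell(\ell-d)(\ell-2d)\cdots(\ell-(r-1)d)}{d^r\,r!}.
\]
The numerator is an integer. Since $r\le d-1$, the denominator $d^r r!$ divides $d^{d-1}(d-1)!=\alpha$. Hence $\alpha\binom{\gamma}{r}\in\Z$, and therefore $\alpha C_n(\gamma)\in\Z[\vb]$. The case $r=0$ is trivial.

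For the second claim, let $p>d$ be prime. Every prime divisor of $\alpha$ is at most $d-1$ or divides $d$, so all such primes are at most $d<p$. Thus $\alpha\in\Z_{(p)}^\times$, and $C_n(\gamma)=\alpha^{-1}\cdot(\alpha C_n(\gamma))\in\Z_{(p)}[\vb]$. Alternatively, one could invoke Lemma~\ref{L:C_n}(2) with $n\le d-1<p$ and $\gamma\in\Z_{(p)}$, since $d$ is invertible at $p$.

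There is no real obstacle. The only point needing care is the bound $r\le n\le d-1$, which guarantees that $d^r r!$ divides $\alpha$; the hypothesis $|\ell|\le g$ is not actually needed for integrality.
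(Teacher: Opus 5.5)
Your proof is correct and is essentially the paper's argument: write $\binom{\ell/d}{r}=\ell(\ell-d)\cdots(\ell-(r-1)d)/(d^r r!)$, use $d^r r!\mid\alpha$ for $r\le n\le d-1$ together with the integrality of $[z^n](B(z)-1)^r$, and then invert $\alpha$ at primes $p>d$. Your side remarks are also accurate: the alternative via Lemma~\ref{L:C_n}(2) works, and the bound $|\ell|\le g$ is not needed for integrality.
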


\begin{proof}
Suppose $0\le n\le d-1$ and $-g\le \ell\le g$.
For every integer $r\ge 0$, the coefficient
$[z^n](B(z)-1)^r\in\Z[\vb]$.
For any integer $r$ with $0\le r \le d-1$,
$\binom{\frac{\ell}{d}}{r}=\frac{\ell(\ell-d)\cdots (\ell-d(r-1))}{d^r r!}$.
The $r$-th summand $\binom{\gamma}{r}[z^n](B(z)-1)^r$ in \eqref{E:C_n}
lies in $\frac{1}{d^r r!}\Z[\vb]$.
Only the terms $0\le r\le n\le d-1$ occur in \eqref{E:C_n}, and since
$d^r r!\mid\alpha$,
we have $\alpha\cdot C_n(\gamma)\in\Z[\vb]$.

For any prime $p>d$, $\alpha$ is coprime to $p$.
The first statement implies $C_n(\gamma)\in\Z_{(p)}[\vb]$.
\end{proof}

\subsection{\texorpdfstring{Coefficient matrix $\cD_\bdelta$ and its determinant
$D_\bdelta$}{Coefficient matrix and its determinant}}

Fix a sign vector $\bdelta
=(\delta_1,\ldots,\delta_g)
\in\{\pm 1\}^g$. In Section~\ref{S:3}, the relevant sign vector
will be obtained from a residue class $k\in(\Z/d\Z)^\times$.
For every $1\le n\le g$, define
\begin{equation}\label{E:D(n)}
\cD_\bdelta(n)\coloneqq \left(C_{j-\delta_i i}\left(-\frac{\delta_i i}{d}\right)
\right)_{1\le i,j\le n},\quad
D_\bdelta(n)\coloneqq \det \cD_\bdelta(n).
\end{equation}
Set $D_\bdelta(0)\coloneqq 1$.
For $n\ge 1$, $\cD_\bdelta(n)$
is an $n\times n$ matrix, while $\cD_\bdelta(n-1)$
is the upper-left $(n-1)\times (n-1)$ submatrix of $\cD_\bdelta(n)$.

\begin{lemma}[Key Lemma]\label{L:M_n}
Let $0\le n\le g$.
Write $\cI_n\coloneqq \{1\le i\le n: \delta_i=-1\}$.
Define the monomial
$
M_n=\prod_{i\in \cI_n}b_{2i}.
$
Then
\begin{equation}
\label{E:product}
[M_n]D_\bdelta(n)
= \prod_{i\in\cI_n} \frac{i}{d}.
\end{equation}
\end{lemma}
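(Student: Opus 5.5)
The plan is to argue by induction on $n$, expanding $D_\bdelta(n)$ along its last row. The base case $n=0$ is immediate: $D_\bdelta(0)=1$, $\cI_0=\emptyset$, $M_0=1$, and both sides of \eqref{E:product} equal $1$.

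Before the inductive step, I record which variables $b_m$ can occur in the entries of $\cD_\bdelta(n)$. The $(i,j)$ entry is $C_{j-\delta_i i}(-\delta_i i/d)$, and its index satisfies
\[
j-\delta_i i\le i+j\le 2n\le 2g=d-1.
\]
So Lemma~\ref{L:C_n}(3) applies whenever the index is positive. Indices $\le 0$ give the constants $1$ or $0$. Hence the $(i,j)$ entry lies in $\Q[b_1,\ldots,b_{j-\delta_i i}]$.

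In particular, the variable $b_{2n}$ can only appear in an entry whose index equals $2n$. This forces $i=j=n$ and $\delta_n=-1$. Every entry with $i<n$ or $j<n$ has index at most $2n-1$, so it is free of $b_{2n}$. Consequently $D_\bdelta(n-1)$ contains no $b_{2n}$.

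For the inductive step, suppose first that $\delta_n=+1$. Then the last row is $\bigl(C_{j-n}(-n/d)\bigr)_j$. Its entries vanish for $j<n$, and the $(n,n)$ entry is $C_0=1$. Expanding along this row gives $D_\bdelta(n)=D_\bdelta(n-1)$. Since $\cI_n=\cI_{n-1}$, we also have $M_n=M_{n-1}$, and the claim follows from the induction hypothesis.

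Now suppose $\delta_n=-1$. The $(n,n)$ entry is $C_{2n}(n/d)$, which Lemma~\ref{L:C_n}(3) writes as $\frac{n}{d}b_{2n}+R$ with $R\in\Q[b_1,\ldots,b_{2n-1}]$. Expanding along the last row gives
\[
D_\bdelta(n)=\Bigl(\tfrac nd b_{2n}+R\Bigr)D_\bdelta(n-1)+\sum_{j<n}(\pm)\,(\text{entry}_{n,j})\cdot(\text{minor}_{n,j}).
\]
By the preliminary observation, every term other than $\frac nd b_{2n}D_\bdelta(n-1)$ is free of $b_{2n}$. The monomial $M_n=b_{2n}M_{n-1}$ has degree exactly one in $b_{2n}$. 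Since $M_{n-1}$ only involves $b_{2i}$ with $i<n$, we get
\[
[M_n]D_\bdelta(n)=\frac nd\,[M_{n-1}]D_\bdelta(n-1).
\]
Combining this with the induction hypothesis gives \eqref{E:product}.

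The main point requiring care is this bookkeeping of which variables occur in which entries. Two things must be checked: that all indices stay $\le d-1$ so that Lemma~\ref{L:C_n}(3) is applicable, and that $b_{2n}$ is confined to the single $(n,n)$ entry with degree one. Once these are verified, the recursion is formal.
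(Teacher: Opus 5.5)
Your proposal is correct and follows essentially the same route as the paper: induction on $n$ with expansion along the last row, the row $(0,\ldots,0,1)$ when $\delta_n=+1$, and, when $\delta_n=-1$, the observation that $b_{2n}$ occurs only in the $(n,n)$ entry, with coefficient $\frac{n}{d}$. Your single bound $j-\delta_i i\le i+j\le 2n\le d-1$ packages the paper's separate row-by-row checks (its cases C2.1 and C2.2) into one preliminary observation.
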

\begin{proof}
The base case $n=0$ is $D_\bdelta(0)=1, M_0=1$, so \eqref{E:product} holds.
Let $1\le n\le g$, and assume that \eqref{E:product}
holds for $n-1$. Consider the last row, namely
the $n$-th row of $\cD_\bdelta(n)$.
We distinguish two cases.

(C1) Suppose $\delta_n=+1$.
For $1\le j\le n-1$,
the $(n,j)$-entry of $\cD_\bdelta(n)$ is
$C_{j-n}(-\frac{n}{d})=[z^{j-n}]B(z)^{-\frac{n}{d}}=0$
since $j-n<0$. Meanwhile,
the $(n,n)$-entry is
$C_0(-\frac{n}{d})=[z^0]B(z)^{-\frac{n}{d}}=1$.
Thus the last row of $\cD_\bdelta(n)$ is
$(0,\ldots,0,1)$.
Expanding along this row gives
$
D_\bdelta(n)=D_\bdelta(n-1).
$
Since $\delta_n=+1$, we have $M_n=M_{n-1}$ and $\cI_n=\cI_{n-1}$.
Therefore, by induction on $n$ we have
\[
[M_n]D_\bdelta(n)=[M_{n-1}]D_\bdelta(n-1)=\prod_{i\in \cI_{n-1}}\frac{i}{d}=\prod_{i\in \cI_n}\frac{i}{d},
\]
which proves \eqref{E:product}.

(C2) Suppose $\delta_n=-1$. The last row of $\cD_\bdelta(n)$ is
\[
\left(C_{n+1}(\frac{n}{d}),
\ldots, C_{2n}(\frac{n}{d})\right).
\]
We claim that $b_{2n}$ only appears in the $(n,n)$-entry of $\cD_\bdelta(n)$.
By Lemma~\ref{L:C_n},
\[
C_{n+j}\left(\frac{n}{d}\right)=\frac{n}{d}b_{n+j}+R_{n+j,\frac{n}{d}}(b_1,\ldots,b_{n+j-1})
\]
so the $(n,n)$-entry $C_{2n}(\frac{n}{d})$ is the only one
in the last row containing a term involving $b_{2n}$; in fact,
its $b_{2n}$-term is precisely $\frac{n}{d}b_{2n}$.
Consider the $i$-th row of $\cD_{\bdelta}(n)$ for $i<n$.

(C2.1) If $\delta_i=+1$, then, with $\gamma=-\frac{i}{d}$,
the $(i,j)$-entry
is $0$ when $j<i$, equals $C_0(\gamma)=1$ when $j=i$, and, when
$j>i$, Lemma~\ref{L:C_n} gives
\[
C_{j-i}(\gamma)=
\gamma b_{j-i}
+
R_{j-i,\gamma}(b_1,\ldots,b_{j-i-1}).
\]
In every case the entry is independent of $b_{2n}$, since
$j-i\le n-1<2n$.

(C2.2)
If $\delta_i=-1$, then, with $\gamma=\frac{i}{d}$,
the $(i,j)$-entry is
$
C_{j+i}(\gamma)=\gamma b_{j+i}+R_{j+i,\gamma}(b_1,\ldots,b_{j+i-1}).
$
Since $j+i\le n+(n-1)=2n-1<2n$, it contains no $b_{2n}$-terms.

It follows that the $(n,n)$-entry is the only entry of the matrix
$\cD_\bdelta(n)$ containing $b_{2n}$, and its $b_{2n}$-term is
$\frac{n}{d}b_{2n}$.
Therefore,
\[
[b_{2n}]D_\bdelta(n)=\frac{n}{d}D_\bdelta(n-1).
\]
Equivalently, as a polynomial in $b_{2n}$,
\begin{equation}\label{E:D_delta}
D_\bdelta(n)=\frac{n}{d}b_{2n}D_\bdelta(n-1)+R(n)
\end{equation}
where $R(n)$ is independent of $b_{2n}$.
Since $\delta_n=-1$,
$M_n=b_{2n}M_{n-1}$.
Taking the coefficient of $M_n$ in \eqref{E:D_delta},
we obtain, by induction,
\[
[M_n]D_\bdelta(n)=\frac{n}{d}[M_{n-1}]D_\bdelta(n-1)
=\frac{n}{d}\prod_{i\in\cI_{n-1}}\frac{i}{d}
=\prod_{i\in \cI_n}\frac{i}{d}.
\]
This proves \eqref{E:product} in the second case.
By induction, \eqref{E:product} holds for every $0\le n\le g$.
\end{proof}

\begin{prop}\label{P:nonzero}
(1) Write $D_\bdelta=D_\bdelta(g)$. Then
$
D_\bdelta^\Z\coloneqq \alpha^g D_\bdelta
\in \Z[\vb].
$
\\
(2)
For every prime $p>d$, the reductions of $D_\bdelta\in\Z_{(p)}[\vb]$ and $D_\bdelta^\Z\in\Z[\vb]$ modulo $p$ are nonzero.
\end{prop}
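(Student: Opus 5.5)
For part (1), the plan is to show that every entry of $\cD_\bdelta(g)$, once multiplied by $\alpha$, lies in $\Z[\vb]$, and then use multilinearity of the determinant row by row. The $(i,j)$-entry is $C_{j-\delta_i i}(-\delta_i i/d)$ with $1\le i,j\le g$.

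The index $n=j-\delta_i i$ then satisfies two bounds. If $\delta_i=+1$, then $n=j-i\le g-1$. If $\delta_i=-1$, then $n=j+i\le 2g=d-1$. Whenever $n<0$ the entry is $0$, which is trivially fine. The argument $\gamma=-\delta_i i/d$ has the form $\ell/d$ with $-g\le \ell\le g$.

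So Lemma~\ref{L:Z_p} applies to every entry and gives $\alpha\cdot(\text{entry})\in\Z[\vb]$. Multiplying each of the $g$ rows by $\alpha$ multiplies the determinant by $\alpha^g$. The resulting matrix has entries in $\Z[\vb]$, hence $D_\bdelta^\Z=\alpha^g D_\bdelta\in\Z[\vb]$.

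For part (2), fix a prime $p>d$. By the second statement of Lemma~\ref{L:Z_p}, every entry lies in $\Z_{(p)}[\vb]$, so $D_\bdelta\in\Z_{(p)}[\vb]$ and its reduction modulo $p$ makes sense. To show the reduction is nonzero, I will exhibit one monomial whose coefficient is a $p$-adic unit. The Key Lemma~\ref{L:M_n} with $n=g$ gives
\[
[M_g]D_\bdelta=\prod_{i\in\cI_g}\frac{i}{d}.
\]
Each factor has $1\le i\le g<p$ and $d<p$, so each $i/d$ lies in $\Z_{(p)}^\times$. The product is therefore a unit, and the coefficient of $M_g$ in $D_\bdelta \bmod p$ is nonzero. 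This gives $\bar D_\bdelta\ne 0$ in $\F_p[\vb]$.

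Finally, $\alpha=d^{d-1}(d-1)!$ has all its prime factors at most $d$, so $\alpha$ is a unit in $\Z_{(p)}$. Hence $D_\bdelta^\Z\equiv \alpha^g D_\bdelta$ is also nonzero modulo $p$. Concretely, its $M_g$-coefficient is the integer $\alpha^g\prod_{i\in\cI_g} i/d$, which is prime to $p$.

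The only step needing real care is confirming the index ranges in part (1). We need every $n$ that occurs to satisfy $n\le d-1$, so that Lemma~\ref{L:Z_p} covers it, and every $\ell=-\delta_i i$ to lie in $[-g,g]$. Both follow from $i,j\le g$ and $2g=d-1$.
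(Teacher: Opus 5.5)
Your proposal is correct and follows essentially the same route as the paper. Like the paper, you apply Lemma~\ref{L:Z_p} entrywise, using $0\le j-\delta_i i\le d-1$ and $|\delta_i i|\le g$, then scale each row by $\alpha$. For part (2) you invoke Lemma~\ref{L:M_n} at $n=g$ to get the $p$-adic unit coefficient $\prod_{i\in\cI_g} i/d$ of $M_g$, and use $\alpha\in\Z_{(p)}^\times$ to transfer nonvanishing to $D_\bdelta^\Z$.
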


\begin{proof}
If $j-\delta_i i <0$, the corresponding entry
$C_{j-\delta_i i}(-\frac{\delta_i i}{d})$
of $\cD_{\bdelta}(g)$
is zero.
Suppose now that $j-\delta_i i\ge 0$.
Then $0\le j-\delta_i i \le d-1$ and $-g\le -\delta_i i\le g$,
hence by Lemma~\ref{L:Z_p},
$ \alpha\; C_{j-\delta_i i}(-\frac{\delta_i i}{d}) \in\Z[\vb]$.

Taking determinants, $\alpha^g D_\bdelta$ lies in $\Z[\vb]$.
For any prime $p>d$, we have $\alpha\in\Z_{(p)}^\times$ so
$D_\bdelta$ lies in $\Z_{(p)}[\vb]$.
Lemma~\ref{L:M_n} shows that
\[
[M_g]D_\bdelta
=\prod_{i\in\cI_g}\frac{i}{d}, \qquad
[M_g]\alpha^g D_\bdelta
=\alpha^g\prod_{i\in\cI_g}\frac{i}{d},
\]
both of which lie in $\Z_{(p)}^\times$
since $p$ is coprime to $d!$ and $\alpha$.
Thus $D_{\bdelta}$ and $D_{\bdelta}^\Z$ are both nonzero after reduction modulo $p$.
\end{proof}

\section{Zariski-open dense subsets of the coefficient space}
\label{S:3}

Fix $k\in(\Z/d\Z)^\times$.
In this section we introduce a determinant polynomial $\Delta_k$
and its integral multiple $\Delta_k^\Z$. We show that
their reductions
modulo $p$ are nonzero
for every prime $p>d$. Then we use these polynomials to
construct key Zariski-open dense subsets for our main theorems.

\subsection{\texorpdfstring{Determinant polynomial $\Delta_k$}{Determinant polynomial}}
\label{S:3.1}

For each $1\le i\le g$,
let $s_i$ be the unique element of
\[
\{-g,-g+1,\ldots,-1,1,\ldots,g-1,g\}
\]
satisfying $s_i\equiv ki\pmod d$.

\begin{lemma}\label{L:delta}
For any $k\in(\Z/d\Z)^\times$,
there exists a unique pair $(\rho,\bdelta)\in S_g\times \{\pm 1\}^g$
such that
$s_{\rho(i)}=\delta_i i$.
We write $\bdelta(k)=(\delta_1,\ldots,\delta_g)$.
\end{lemma}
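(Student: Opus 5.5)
The plan is to show that the map $i\mapsto |s_i|$ is a bijection of $\{1,\ldots,g\}$ onto itself. Once this is known, everything else is bookkeeping. Set $\rho\coloneqq(i\mapsto |s_i|)^{-1}$ and $\delta_i\coloneqq\sgn(s_{\rho(i)})$. Then $s_{\rho(i)}=\delta_i i$ holds by construction.

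First I check that each $s_i$ is well defined and that $|s_i|\in\{1,\ldots,g\}$. The set $\{-g,\ldots,-1,1,\ldots,g\}$, together with $0$, is a complete residue system modulo $d=2g+1$. Since $k$ is a unit and $1\le i\le g<d$, we have $ki\not\equiv 0\pmod d$. Hence $s_i$ exists, is unique, and satisfies $1\le|s_i|\le g$.

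Next I show injectivity of $i\mapsto|s_i|$. Suppose $|s_i|=|s_j|$. Then $s_i=\pm s_j$, so $k(i\mp j)\equiv 0\pmod d$, and since $k$ is a unit, $i\equiv\pm j\pmod d$. The case $i\equiv -j$ is impossible, because $2\le i+j\le 2g<d$. So $i\equiv j$, and as $|i-j|<d$ we get $i=j$. An injective self-map of a finite set is bijective, so existence follows.

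For uniqueness, suppose $(\rho',\bdelta')$ also works. Taking absolute values in $s_{\rho'(i)}=\delta'_i i$ gives $|s_{\rho'(i)}|=i$, so $\rho'$ is the inverse of $i\mapsto|s_i|$, that is, $\rho'=\rho$. Then $\delta'_i=s_{\rho(i)}/i=\delta_i$.

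No step is a real obstacle. The only point needing care is ruling out $i\equiv -j\pmod d$ in the injectivity argument, which is exactly where $d=2g+1$ and $i,j\le g$ are used.
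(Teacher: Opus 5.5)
Your proof is correct and follows essentially the same route as the paper: both show that $i\mapsto |s_i|$ is injective, hence bijective, on $\{1,\ldots,g\}$. The injectivity step reduces $|s_i|=|s_j|$ to $i\equiv\pm j\pmod d$ and excludes $i\equiv -j$ via $2\le i+j\le 2g<d$; then $\rho$ and the signs are read off, and uniqueness follows by taking absolute values. Your explicit check that $s_i$ is well defined with $1\le |s_i|\le g$ is a small detail the paper leaves implicit.
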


\begin{proof}
If $|s_i|=|s_j|$, then $ki\equiv \pm kj\pmod d$.
But $k$ is invertible modulo $d$,
so we have $i\equiv \pm j\pmod d$.
If $i\equiv j\pmod d$ then $i=j$ since $1\le i,j\le g<d$.
If $i\equiv -j\pmod d$, then $d\mid i+j$.
This is impossible since $2\le i+j\le 2g=d-1$.
Thus,
$\{|s_1|,\ldots,|s_g|\}=\{1,\ldots,g\}$.
Therefore, there exists a permutation $\rho$ in $S_g$ such that
$|s_{\rho(i)}|=i$ for $i=1,\ldots,g$.
Moreover, for each $i$, the index $\rho(i)$ is
uniquely determined by the condition
$|s_{\rho(i)}|=i$, and
then $\delta_i$ is uniquely determined by
$s_{\rho(i)}=\delta_i i$.
Thus both $\rho$ and the sign vector $\bdelta$
are unique.
\end{proof}

Let $\vA=(A_1,\ldots,A_{d-1})$ denote the coordinate variables on $\A_\Z^{d-1}$.
Define a $\Q$-algebra isomorphism
\begin{equation}\label{E:iota}
\iota: \Q[\vb]\longrightarrow \Q[\vA], \qquad \iota(b_r)=A_{d-r}.
\end{equation}
Define
\begin{equation*}
\boxed{\Delta_k(\vA)\coloneqq
\iota \left(\det_{1\le i,j\le g} C_{j-s_i}\left(-\frac{s_i}{d}\right)\right).}
\end{equation*}

\begin{prop}\label{P:Delta}
Let $\Delta_k^\Z(\vA)=\alpha^g\Delta_k(\vA)$.
Then $\Delta_k^\Z(\vA)\in\Z[\vA]$ is nonzero.
For any prime $p>d$,
the reductions of $\Delta_k\in\Z_{(p)}[\vA]$ and $\Delta_k^\Z\in\Z[\vA]$
modulo $p$ are nonzero.
Moreover, $\Delta_1=1$.
\end{prop}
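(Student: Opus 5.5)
The plan is to reduce $\Delta_k$ to the determinant $D_{\bdelta(k)}$ of Section~\ref{S:2} by a row permutation, and then invoke Proposition~\ref{P:nonzero}.

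\textbf{Step 1: reduce to $D_\bdelta$.} Let $(\rho,\bdelta)=(\rho,\bdelta(k))$ be the pair given by Lemma~\ref{L:delta}, so that $s_{\rho(i)}=\delta_i i$. Permute the rows of the matrix $\bigl(C_{j-s_i}(-\frac{s_i}{d})\bigr)_{1\le i,j\le g}$ by $\rho$. The $i$-th row of the permuted matrix is
\[
\Bigl(C_{j-\delta_i i}\bigl(-\tfrac{\delta_i i}{d}\bigr)\Bigr)_{1\le j\le g},
\]
which is exactly the $i$-th row of $\cD_\bdelta(g)$. Hence
\[
\det_{1\le i,j\le g} C_{j-s_i}\bigl(-\tfrac{s_i}{d}\bigr)=\sgn(\rho)\,D_\bdelta ,
\]
and so $\Delta_k=\sgn(\rho)\,\iota(D_\bdelta)$ and $\Delta_k^\Z=\sgn(\rho)\,\iota(D_\bdelta^\Z)$.

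\textbf{Step 2: transfer integrality and nonvanishing.} The map $\iota$ only renames variables, via $b_r\mapsto A_{d-r}$. It therefore restricts to isomorphisms $\Z[\vb]\to\Z[\vA]$ and $\Z_{(p)}[\vb]\to\Z_{(p)}[\vA]$, and it commutes with reduction modulo $p$. Proposition~\ref{P:nonzero}(1) gives $D_\bdelta^\Z\in\Z[\vb]$, hence $\Delta_k^\Z\in\Z[\vA]$. For a prime $p>d$, Proposition~\ref{P:nonzero}(2) says the reductions of $D_\bdelta$ and $D_\bdelta^\Z$ modulo $p$ are nonzero. Multiplying by the sign $\pm1$ and applying $\iota$ preserves this, so the reductions of $\Delta_k$ and $\Delta_k^\Z$ modulo $p$ are nonzero. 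Nonvanishing over $\Z$ follows by taking any prime $p>d$.

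\textbf{Step 3: the case $k=1$.} Here $s_i=i$, since $1\le i\le g$ already lies in the prescribed range. The matrix becomes $\bigl(C_{j-i}(-\frac{i}{d})\bigr)$. Entries with $j<i$ vanish because $C_n=0$ for $n<0$, and the diagonal entries are $C_0=1$. The matrix is therefore upper unitriangular, its determinant is $1$, and $\Delta_1=\iota(1)=1$.

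\textbf{Main obstacle.} The only real care needed is bookkeeping in Step 1: checking that the row permutation matches the rows of $\cD_\bdelta(g)$ entry by entry, and tracking the sign $\sgn(\rho)$. Everything else is formal.
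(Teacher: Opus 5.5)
Your proposal is correct and follows the paper's proof essentially step for step. The row permutation $\rho$ from Lemma~\ref{L:delta} gives $\Delta_k=\sgn(\rho)\,\iota(D_{\bdelta(k)})$, Proposition~\ref{P:nonzero} then supplies integrality and nonvanishing modulo every prime $p>d$, and for $k=1$ the matrix is upper unitriangular. Your explicit remark that $\Delta_k^\Z\neq 0$ over $\Z$ follows from its nonvanishing modulo any $p>d$ spells out a step the paper leaves implicit.
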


\begin{proof}
Let $\bdelta=\bdelta(k)$ as in Lemma~\ref{L:delta}.
After permuting the rows according to $\rho$, the matrix
\[
\left(C_{j-s_i}(-\frac{s_i}{d})\right)_{1\le i,j\le g}
\]
becomes
\[
\left(C_{j-\delta_i i}(-\frac{\delta_i i}{d})\right)_{1\le i,j\le g} = \cD_{\bdelta(k)}(g).
\]sa
Consequently,
\begin{equation*}
\Delta_k (\vA)
=\sgn(\rho)\; \iota(D_{\bdelta(k)}),
\qquad
\Delta_k^\Z
=\alpha^g \;\Delta_k
=\sgn(\rho)\; \iota(D_\bdelta^\Z).
\end{equation*}
For any prime $p>d$, Proposition~\ref{P:nonzero} gives
$\Delta_k \in\Z_{(p)}[\vA]$ and $\Delta_k^\Z\in\Z[\vA]$.
Then Proposition~\ref{P:nonzero}(2)
shows that their reductions modulo $p$ are nonzero.

Finally, when $k=1$, one has $s_i=i$ for all $i$. Thus
the sign vector $\bdelta(1)=(1,\ldots,1)$ and $\cD_{\bdelta(1)}$ is
upper triangular with all diagonal entries equal to $1$. Hence
$$\Delta_1=\iota(D_{\bdelta(1)})=1.$$
\end{proof}

Below we use the polynomial $\Delta_k^\Z$ to
construct a common Zariski-open subscheme $U\subseteq \A_\Z^{d-1}$.
Its generic fiber $U_\Q$ and special fibers $U_{\F_p}$
appear in the main theorems of Section~\ref{S:5}.
This provides a uniform theory behind our results.

\begin{corollary}\label{C:U}
For every $k\in (\Z/d\Z)^\times$, let $U_k\coloneqq
D(\Delta_k^\Z)$ in $\A_\Z^{d-1}$.
Let
\[
U\coloneqq \bigcap_{k\in(\Z/d\Z)^\times}U_k\subseteq \A_{\Z}^{d-1}.
\]
Let $K\in \{\Q, \F_p\}$, and write
\[
U_K\coloneqq U\times_\Z K, \qquad U_{k,K}\coloneqq U_k\times_\Z K.
\]
Then $U$ is a Zariski-open dense subscheme of $\A_\Z^{d-1}$,
and $U_\Q$ is a Zariski-open dense subscheme of
$\A_\Q^{d-1}$.
If $p>d$, then $U_{k,\F_p}$ and $U_{\F_p}$ are Zariski-open dense
subschemes of $\A_{\F_p}^{d-1}$.
\end{corollary}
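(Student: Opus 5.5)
The argument is elementary commutative algebra applied to the nonvanishing statements of Proposition~\ref{P:Delta}. Since $\A_\Z^{d-1}=\Spec\Z[\vA]$ is an integral scheme, a nonempty open subscheme of it is automatically dense. A finite intersection of nonempty opens in an irreducible space is again nonempty, hence open and dense. So everything reduces to showing that the relevant principal opens $D(\cdot)$ are nonempty on each of the schemes in question.

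\textbf{Over $\Z$.} Each $U_k=D(\Delta_k^\Z)$ is open by definition. By Proposition~\ref{P:Delta}, $\Delta_k^\Z\neq0$ in the domain $\Z[\vA]$. Hence $U_k$ contains the generic point and is nonempty. The group $(\Z/d\Z)^\times$ is finite, so $U=\bigcap_k U_k=D\bigl(\prod_k\Delta_k^\Z\bigr)$ is a principal open. Its defining element $\prod_k\Delta_k^\Z$ is a product of nonzero elements of a domain, so it is nonzero. Therefore $U$ is open and dense in $\A_\Z^{d-1}$.

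\textbf{Over $\Q$.} Base change gives $U_\Q=D\bigl(\prod_k\Delta_k^\Z\bigr)\subseteq\Spec\Q[\vA]$. Because $\Z[\vA]\hookrightarrow\Q[\vA]$, this product is still nonzero in the domain $\Q[\vA]$. So $U_\Q$ is open and dense in the irreducible scheme $\A_\Q^{d-1}$.

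\textbf{Over $\F_p$ with $p>d$.} Here the only real input is Proposition~\ref{P:Delta}, which says that the reduction $\bar\Delta_k^\Z\in\F_p[\vA]$ is nonzero for every $p>d$. Then $U_{k,\F_p}=D(\bar\Delta_k^\Z)$ is a nonempty open of the irreducible scheme $\A_{\F_p}^{d-1}$, hence dense. Likewise $U_{\F_p}=D\bigl(\prod_k\bar\Delta_k^\Z\bigr)$ is defined by a product of nonzero elements of the domain $\F_p[\vA]$, so it is nonempty, open and dense.

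The only point requiring care is the identification of the base change of a principal open with the principal open of the image element, $D(f)\times_\Z K=D(\bar f)$. This holds because $\Z[\vA]_f\otimes_\Z K=K[\vA]_{\bar f}$. With that in hand there is no genuine obstacle: all the substantive work, namely the nonvanishing mod $p$ for $p>d$, was already done in Proposition~\ref{P:nonzero} through the Key Lemma~\ref{L:M_n}.
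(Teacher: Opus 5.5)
Your proposal is correct and follows the paper's proof: you take the nonvanishing of $\Delta_k^\Z$ in $\Z[\vA]$, and of its reduction modulo $p$ for $p>d$, from Proposition~\ref{P:Delta}, and combine it with the irreducibility of affine space to get nonempty, hence dense, principal opens whose finite intersection stays open and dense. Writing $U=D\bigl(\prod_k\Delta_k^\Z\bigr)$ and checking $D(f)\times_\Z K=D(\bar f)$ only makes explicit steps the paper leaves implicit.
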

\begin{proof}
By Proposition~\ref{P:Delta}, $\Delta_k^\Z$ is a nonzero polynomial
with coefficients in $\Z$.
The scheme $\A_\Z^{d-1}$ is irreducible.
Therefore $U_k=D(\Delta_k^\Z)$ is a Zariski-open dense subset of $\A_\Z^{d-1}$.
So is the finite intersection $U$.
The same argument applies to $U_\Q$.
If $p>d$, Proposition~\ref{P:Delta} shows that
the reduction modulo $p$ $\bar{\Delta_k^\Z}$ is nonzero in
$\F_p[\vA]$ for every $k$. Hence
each $U_{k,\F_p}=D(\bar{\Delta_k^\Z})$ is Zariski-open dense, and so is $U_{\F_p}=\bigcap_k U_{k,\F_p}$.
\end{proof}
	
Thus,
for a field $K$ of characteristic zero,
\[
U_\Q(K)=\{\va\in K^{d-1} :
\Delta_k^\Z(\va)\ne 0 \text{ for every }k\in(\Z/d\Z)^\times
\}.
\]
Note that $U_{\F_p}$ is the special fiber of $U$ at $p$.

\subsection{Some arithmetic preparation}
\label{S:3.2}

Fix $k\in(\Z/d\Z)^\times$ and a prime $p\equiv k\pmod d$.
Recall that $\alpha=d^{d-1}(d-1)!$, as in Section~\ref{S:2}.
Let
\begin{equation*}
m=\frac{p-1}{2},\qquad c_i\coloneqq \frac{pi-s_i}{d}, \qquad
u_p\coloneqq \prod_{i=1}^g\binom{m}{c_i},\qquad \nu_p\coloneqq
\frac{u_p}{\alpha^g}.
\end{equation*}

\begin{lemma}\label{L:u_p}
Let $p\ge 2d-1$ and $p\equiv k\pmod d$.
Then each $c_i$ is an integer satisfying
$1\le c_i\le m$, and
$u_p$ and $\nu_p$ are both $p$-adic units in $\Z_{(p)}^\times$.
\end{lemma}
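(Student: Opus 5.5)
Since $p\equiv k\pmod d$ and $s_i\equiv ki\pmod d$, we have $pi-s_i\equiv ki-ki\equiv 0\pmod d$. Hence each $c_i=(pi-s_i)/d$ is an integer.

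For the bounds we use $1\le i\le g$ and $|s_i|\le g$. For the lower bound, $pi-s_i\ge p-g$, and $p-g\ge d$ because $p\ge 2d-1>d+g$. So $c_i\ge 1$. For the upper bound we need $pi-s_i\le d(p-1)/2$, i.e.\ $2pi-2s_i\le dp-d$. The worst case is $i=g$ and $s_i=-g$. Writing $2g=d-1$, the inequality becomes $p(d-1)+(d-1)\le dp-d$, which is equivalent to $p\ge 2d-1$. This is exactly the hypothesis, so $c_i\le m$.

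Next, $u_p$ is a $p$-adic unit. Since $0\le c_i\le m<p$, we have $\binom{m}{c_i}=m!/(c_i!(m-c_i)!)$ with every factorial involving only integers less than $p$. Thus numerator and denominator are both prime to $p$. Alternatively, Lucas' theorem applies directly because all three numbers are single base-$p$ digits. Each $\binom{m}{c_i}$ is a positive integer prime to $p$, so the product $u_p$ lies in $\Z_{(p)}^\times$.

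Finally, $\nu_p=u_p/\alpha^g$ with $\alpha=d^{d-1}(d-1)!$. All prime factors of $\alpha$ are at most $d<p$, so $\alpha\in\Z_{(p)}^\times$, and therefore $\nu_p\in\Z_{(p)}^\times$. The only step needing care is the upper bound $c_i\le m$, and that is where the precise threshold $p\ge 2d-1$ is used.
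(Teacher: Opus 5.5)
Your proposal is correct and follows essentially the same route as the paper: integrality from $s_i\equiv ki\equiv pi\pmod d$, the bounds $p-g\le pi-s_i\le pg+g$ with $pg+g\le dm$ equivalent to $p\ge 2d-1$, and $p$-adic unitness of each $\binom{m}{c_i}$ and of $\alpha$ because every factor involved is a positive integer less than $p$. The differences are cosmetic and change nothing: you write $\binom{m}{c_i}$ as $m!/(c_i!(m-c_i)!)$ instead of as a falling factorial over $c_i!$, and you get $c_i\ge 1$ from $p-g\ge d$ instead of from $c_i>0$ together with integrality.
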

\begin{proof}
Since $s_i\equiv ki \equiv pi\pmod d$, we have $c_i\in\Z$.
Then $c_i=\frac{pi-s_i}{d}\ge\frac{p-g}{d}>0$ and
$c_i=\frac{pi-s_i}{d}\le \frac{pg+g}{d}\le m<p$. The inequality
$\frac{pg+g}{d}\le m$ is equivalent to $p\ge 4g+1=2d-1$.
So $\binom{m}{c_i}=\frac{m(m-1)\cdots (m-c_i+1)}{c_i!}$.
The numerator and denominator here are $p$-adic units since
each displayed factor is a positive integer less than $p$.
Thus $\binom{m}{c_i}\in\Z_{(p)}^\times$, and hence
$u_p\in\Z_{(p)}^\times$. Since $p\nmid \alpha$,
it follows that $\nu_p$ is also a $p$-adic unit.
\end{proof}

Put
\begin{equation}\label{E:eta}
\eta(x)\coloneqq x^d+A_{d-1}x^{d-1}+\cdots+A_1x\in\Z[\vA][x].
\end{equation}
Then its reciprocal polynomial is
\[
\eta^*(z)\coloneqq z^d \eta(1/z) = 1+A_{d-1}z+\cdots+A_1z^{d-1}=\iota(B(z))\in\Z[\vA][z],
\]
where $\iota$, as defined in \eqref{E:iota}, is applied coefficient-wise,
and $B(z)$ is as in \eqref{E:B(z)}.

\begin{lemma}\label{L:reverse}
Let $p$ be a prime satisfying $p\equiv k\pmod d$
and $p\ge 2d-1$. For $1\le i,j\le g$,
we have
\[
[x^{pi-j}]\eta(x)^{c_i}
=\iota\left(C_{j-s_i}(c_i) \right)
\]
where $C_n(X)$ is as defined in \eqref{E:C_n}.
\end{lemma}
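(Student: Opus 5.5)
We convert the coefficient extraction in $x$ into one in the reversed variable $z=1/x$, and then expand using the definition \eqref{E:C_n}. Since $\eta(x)=x^d\eta^*(1/x)$, we have $\eta(x)^{c_i}=x^{dc_i}\,\eta^*(1/x)^{c_i}$. Hence
\[
[x^{pi-j}]\eta(x)^{c_i}=[z^{dc_i-(pi-j)}]\,\eta^*(z)^{c_i}.
\]
By the definition of $c_i$, $dc_i=pi-s_i$, so the exponent is $dc_i-pi+j=j-s_i$. Thus the left side equals $[z^{j-s_i}]\eta^*(z)^{c_i}=\iota\bigl([z^{j-s_i}]B(z)^{c_i}\bigr)$, because $\eta^*=\iota(B)$ coefficientwise and $\iota$ is a ring homomorphism commuting with extracting $z$-coefficients.

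It remains to identify $[z^n]B(z)^{c}$ with $C_n(c)$ for the nonnegative integer $c=c_i$ and $n=j-s_i$. If $n<0$, both sides vanish: $B(z)^c$ is a polynomial in $z$, and $C_n:=0$ for $n<0$. If $n\ge0$, the ordinary binomial theorem gives
\[
B(z)^c=(1+(B(z)-1))^c=\sum_{r=0}^{c}\binom{c}{r}(B(z)-1)^r .
\]
Since $B(z)-1$ is divisible by $z$, only the terms with $r\le n$ contribute to $[z^n]$. For $r>c$ we have $\binom{c}{r}=0$, so truncating the sum at $r=n$ (in place of $r=c$) loses or adds nothing. This is exactly $C_n(c)$ as in \eqref{E:C_n}, which is also consistent with Lemma~\ref{L:C_n}(1) for an integer exponent.

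The only point needing care is bookkeeping rather than substance. Lemma~\ref{L:u_p} guarantees that $c_i$ is a positive integer, which makes $\eta(x)^{c_i}$ a genuine polynomial and the binomial expansion finite. We should also confirm that $0\le pi-j\le dc_i$ whenever $n\ge 0$, but the reversal identity holds for all exponents in any case, with both sides zero outside the range. No hypothesis beyond $p\equiv k\pmod d$ and the integrality of $c_i$ is actually used; the bound $p\ge 2d-1$ is needed only to invoke Lemma~\ref{L:u_p}.
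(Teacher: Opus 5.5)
Your proof is correct and follows the paper's argument: you use the reciprocal-polynomial identity with $N=dc_i$ and $r=pi-j$, compute $dc_i-(pi-j)=j-s_i$, and use $\eta^*=\iota(B)$ coefficientwise. The only difference is that you explicitly verify $[z^n]B(z)^{c}=C_n(c)$ for a nonnegative integer $c$, via the finite binomial theorem and the fact that $(B(z)-1)^r$ is divisible by $z^r$. The paper dismisses that step as following from the definition (cf.\ Lemma~\ref{L:C_n}(1)).
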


\begin{proof}
For any polynomial $P(x)$ of degree $N$,
let $Q(z)=z^NP(1/z)$ be its reciprocal polynomial.
We use the elementary identity
\[
[x^r]P(x)=[z^{N-r}]z^N P(1/z)=[z^{N-r}]Q(z)
\]
for $r\in\Z$.
Apply this formula to $P(x)=\eta(x)^{c_i}, Q(z)=\eta^*(z)^{c_i}$, $N=dc_i$, and $r=pi-j$.
Since $dc_i=pi-s_i$, we have $N-r=dc_i-pi+j=j-s_i$. Thus
\[
[x^{pi-j}]\eta(x)^{c_i} = [z^{j-s_i}]\eta^*(z)^{c_i}
=\iota([z^{j-s_i}]B(z)^{c_i})=\iota\left( C_{j-s_i}(c_i)\right)
\]
where the last equality follows from the definition. This proves the lemma.
\end{proof}

\section{Universal Hasse--Witt determinant polynomials}
\label{S:4}

Let $K$ be a field of characteristic $p>2$ with $p\nmid d$,
and let $\va\in K^{d-1}$.
Define $f_{\va,t}$ as in
\eqref{E:f}. Put
$N_\va(t)\coloneqq\disc_x(f_{\va,t})$. We have seen that $N_\va(t)\ne 0$ since
$p\nmid d$. In particular, this holds whenever $p>d$, and hence throughout the sequel.
Let
\[
\cC_{\va,K}\longrightarrow \cS_{\va,K}^\circ\coloneqq\Spec K[t,1/N_{\va}(t)]
\]
be the hyperelliptic family given by $y^2=f_{\va,t}(x)$.
The family is \emph{generically ordinary}
if its geometric generic fiber is ordinary.
Equivalently, after extension to an algebraic closure of
$K(t)$, the curve defined by $y^2=f_{\va,t}(x)$ is ordinary.

Recall the standard
Hasse--Witt criterion for any
hyperelliptic curve
$C: y^2=f(x)$
over a field $K$ of characteristic $p>2$,
where $f$ is square-free of degree $d$ (see \cite{Has37, Man65}).
Write $m=(p-1)/2$.
If
\[
f(x)^m=\sum_{r\ge 0} \beta_r x^r,
\]
then, with respect to the basis of $H^1(C,\cO_C)$ that is Serre dual to
$\{x^{j-1}dx/y\}_{j=1}^g$, the Hasse--Witt matrix of $C$ is
\[
(\beta_{pj-i})_{1\le i,j\le g}.
\]
Consequently, its transpose is $(\beta_{pi-j})_{1\le i,j\le g}$, and its
determinant is
\[
\det(\beta_{pi-j})_{1\le i,j\le g}
=\det([x^{pi-j}]f(x)^m)_{1\le i,j\le g}.
\]

Motivated by the above formula,
the
\emph{universal Hasse--Witt determinant polynomial} associated with the family
$\cC_\vA: y^2=f_{\vA,t}(x)$ is given by
\begin{equation*}
\boxed{
H_p(\vA,t) \coloneqq \det_{1\le i,j\le g} M_{ij}(\vA,t)\in \Z[\vA][t]}
\end{equation*}
where
\[
M_{ij}(\vA,t)\coloneqq [x^{pi-j}]f_{\vA,t}(x)^m.
\]

\begin{remark}[Hasse--Witt criterion for generic ordinarity]
\label{R:Hasse--Witt}
Reduce $H_p(\vA,t)$ modulo $p$ and specialize
at $\vA=\va\in K^{d-1}$, where $K$ is an extension of $\F_p$. The
resulting polynomial
$\bar{H}_p(\va,t)\in K[t]$
is the determinant of the Hasse--Witt matrix of
the generic fiber.
Thus the family is generically ordinary if and only if
$\bar{H}_p(\va,t)\ne 0$.
\end{remark}

For the remainder of this section, fix $k\in (\Z/d\Z)^\times$
and a prime $p\ge 2d-1$ satisfying $p\equiv k\pmod d$, and
retain the notation $s_i,c_i,u_p,\nu_p$ from
Subsection~\ref{S:3.2}.
By Lemma~\ref{L:u_p}, $c_i\le m$ for every $i$, so put
$$E \coloneqq \sum_{i=1}^g \left(m-c_i\right) \ge 0.$$

\begin{lemma}\label{L:above}
With the notation above, we have
$\deg_t M_{ij}(\vA,t) \le m-c_i$.
Let $\iota$ be as in \eqref{E:iota}.
Then
\[
[t^{m-c_i}]M_{ij}(\vA,t)=\binom{m}{c_i}\iota(C_{j-s_i}(c_i))\in \Z[\vA].
\]
Finally, $\deg_t H_p\le E$.
\end{lemma}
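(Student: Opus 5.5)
We plan to expand $f_{\vA,t}(x)^m=(\eta(x)+t)^m$ binomially in $t$, with $\eta$ as in \eqref{E:eta}:
\[
f_{\vA,t}(x)^m=\sum_{r=0}^m\binom{m}{r}t^{m-r}\eta(x)^r .
\]
Hence
\[
M_{ij}(\vA,t)=\sum_{r=0}^m\binom{m}{r}t^{m-r}\,[x^{pi-j}]\eta(x)^r .
\]
The first task is to see which values of $r$ can contribute. Since $x\mid\eta(x)$ and $\deg\eta=d$, every monomial in $\eta(x)^r$ has degree between $r$ and $dr$. So $[x^{pi-j}]\eta(x)^r\neq0$ forces $dr\ge pi-j$, that is, $r\ge (pi-j)/d$.

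Next we compare this threshold with $c_i=(pi-s_i)/d$. We have
\[
\frac{pi-j}{d}=c_i+\frac{s_i-j}{d},
\]
and $|s_i-j|\le g+g=d-1<d$, so $(s_i-j)/d>-1$. Because $r$ and $c_i$ are integers, $r>c_i-1$ gives $r\ge c_i$. All surviving terms therefore have $t$-exponent $m-r\le m-c_i$, which is the degree bound $\deg_t M_{ij}\le m-c_i$. This is the step that needs care: the bound on $|s_i-j|$ is exactly what keeps the threshold within one unit of $c_i$.

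The coefficient of $t^{m-c_i}$ comes only from the single term $r=c_i$. It equals $\binom{m}{c_i}[x^{pi-j}]\eta(x)^{c_i}$, and Lemma~\ref{L:reverse} rewrites this as $\binom{m}{c_i}\iota(C_{j-s_i}(c_i))$. This quantity lies in $\Z[\vA]$, since it is an integer times a coefficient of a polynomial in $\Z[\vA][x]$. When $j<s_i$, both sides are $0$, consistent with the convention $C_n=0$ for $n<0$.

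Finally, we bound the degree of the determinant by the Leibniz expansion $H_p=\sum_{\sigma}\sgn(\sigma)\prod_i M_{i\sigma(i)}$. Each product has $t$-degree at most $\sum_i(m-c_i)=E$, so $\deg_t H_p\le E$.
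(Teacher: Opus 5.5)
Your proposal is correct and follows essentially the same route as the paper. Both arguments expand $(\eta(x)+t)^m$ binomially in $t$. Both then kill the terms with $r<c_i$ by a degree count, where your bound $j-s_i\le d-1$ is the same inequality as the paper's $j<d+s_i$. Finally, both read off the top coefficient via Lemma~\ref{L:reverse}, using $c_i\le m$ from Lemma~\ref{L:u_p}, and bound $\deg_t H_p$ by the Leibniz expansion.
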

\begin{proof}
We have
\begin{equation}\label{E:Mij}
M_{ij}(\vA,t)=
[x^{pi-j}](\eta(x)+t)^m=
\sum_{q=0}^{m}
\binom{m}{q}t^{m-q}[x^{pi-j}]\eta(x)^q.
\end{equation}
An elementary calculation shows that
if $q< c_i$ then
\[
dq\le d(c_i-1)=pi-s_i-d<pi-j,
\]
because $j\le g<d+s_i$; indeed, $d+s_i\ge d-g=g+1$.
Thus
\[
[x^{pi-j}]\eta(x)^q=0 \text{ for $q<c_i$}.
\]
So only the terms $q\ge c_i$ contribute to \eqref{E:Mij}.
Therefore we have
\[
\deg_t M_{ij}(\vA,t) \le m-c_i.
\]
By \eqref{E:Mij} and then by Lemma~\ref{L:reverse},
\[
[t^{m-c_i}]M_{ij}(\vA,t)=\binom{m}{c_i}[x^{pi-j}]\eta(x)^{c_i}
=\binom{m}{c_i}\iota\left(C_{j-s_i}(c_i)\right).
\]
Finally, since $H_p$ is the determinant, we have
\[
\deg_t H_p\le \sum_{i=1}^{g}\max_{1\le j\le g}\deg_t M_{i,j}
\le\sum_{i=1}^g (m-c_i)=E.
\]
\end{proof}

By the lemma, we can write
\begin{equation*}
H_p(\vA,t)=\sum_{r=0}^E h_r(\vA)t^r, \qquad h_r(\vA)\in\Z[\vA].
\end{equation*}
Reducing coefficients modulo $p$, we obtain
\begin{equation}\label{E:HH}
\bar{H}_p(\vA,t)=\sum_{r=0}^E \bar{h}_r(\vA)t^r.
\end{equation}

\begin{prop}\label{P:H}
\leavevmode
\begin{enumerate}
\item
Let $k\in(\Z/d\Z)^\times$ and let $p\ge 2d-1$ be any prime satisfying
$p\equiv k \pmod d$.
Let $u_p,\nu_p$ be as in Subsection~\ref{S:3.2}.
Then
\[
\bar{h}_E(\vA)=\bar{\nu}_p\bar{\Delta_k^\Z}(\vA)\ne 0.
\]
\item
Let $p$ be any prime satisfying $p\equiv 1 \pmod d$.
Then $\bar{h}_E(\vA)=\bar{u}_p\ne 0$.
\end{enumerate}
\end{prop}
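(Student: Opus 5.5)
The plan is to identify $h_E$ as the determinant of the matrix of top $t$-coefficients of the rows, and then reduce that determinant modulo $p$ using the congruence $c_i\equiv -s_i/d \pmod p$.

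\emph{Step 1: the top coefficient.} By Lemma~\ref{L:above}, every entry of the $i$-th row of $(M_{ij})$ has $t$-degree at most $m-c_i$. Expand $H_p=\det(M_{ij})$ by the Leibniz formula. Each term $\prod_i M_{i,\pi(i)}$ has degree at most $\sum_i(m-c_i)=E$. Its $t^E$-coefficient is $\prod_i [t^{m-c_i}]M_{i,\pi(i)}$. Summing over permutations gives
\[
h_E(\vA)=\det_{1\le i,j\le g}\bigl([t^{m-c_i}]M_{ij}\bigr)
=\Bigl(\prod_{i=1}^g\binom{m}{c_i}\Bigr)\det_{1\le i,j\le g}\iota\bigl(C_{j-s_i}(c_i)\bigr).
\]
Here the second equality is the formula of Lemma~\ref{L:above}, after pulling the factor $\binom{m}{c_i}$ out of row $i$. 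The product of these factors is exactly $u_p$.

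\emph{Step 2: reduction mod $p$.} Since $c_i=(pi-s_i)/d$ and $p\nmid d$, we have $c_i\equiv -s_i/d\pmod p$ in $\Z_{(p)}$. The index satisfies $j-s_i\le g+g=d-1<p$. Lemma~\ref{L:C_n}(2) then gives $C_{j-s_i}(c_i)\equiv C_{j-s_i}(-s_i/d)\pmod p$ in $\Z_{(p)}[\vb]$. Negative indices are harmless, since both sides are $0$. Applying $\iota$ and taking determinants yields
\[
\bar h_E=\bar u_p\,\bar\Delta_k=\bar\nu_p\,\bar{\Delta_k^\Z},
\]
because $\Delta_k^\Z=\alpha^g\Delta_k$ and $\nu_p=u_p/\alpha^g$. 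Both $\Delta_k$ and $\Delta_k^\Z$ lie in $\Z_{(p)}[\vA]$ by Proposition~\ref{P:Delta}, so the reduction makes sense. Nonvanishing follows from two facts: $\nu_p$ is a $p$-adic unit by Lemma~\ref{L:u_p}, and $\bar{\Delta_k^\Z}\neq0$ for $p>d$ by Proposition~\ref{P:Delta}.

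\emph{Step 3: part (2).} If $p\equiv1\pmod d$ with $d$ odd, then $p=d+1$ is impossible because it is even. Hence $p\ge 2d+1\ge 2d-1$, and part (1) applies with $k=1$. Proposition~\ref{P:Delta} gives $\Delta_1=1$, so $\bar h_E=\bar u_p\bar\Delta_1=\bar u_p$. This is nonzero by Lemma~\ref{L:u_p}. Alternatively, one can argue directly: $s_i=i$, so the matrix $\bigl(C_{j-i}(c_i)\bigr)$ is upper unitriangular.

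The only delicate point is the bookkeeping in Step 2. One must check that all indices $j-s_i$ stay below $p$, so that Lemma~\ref{L:C_n}(2) applies, and that the two normalizations $u_p$ versus $\nu_p$ and $\Delta_k$ versus $\Delta_k^\Z$ match up. Everything else is formal.
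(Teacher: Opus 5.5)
Your proposal is correct and follows the paper's proof essentially step for step. You extract $h_E$ as the determinant of the row-wise top $t$-coefficients via Leibniz, then reduce using $c_i\equiv -s_i/d\pmod p$ and Lemma~\ref{L:C_n}(2) with $j-s_i\le d-1<p$. For part (2) you use $p\ge 2d+1$ and $\Delta_1=1$, and your upper-unitriangular remark is a harmless direct check of the same fact.
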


\begin{proof}
(1)
Fix $1\le i,j\le g$.
Write $n=j-s_i$.
Notice that $n\le d-1<p$.
On the other hand, $c_i=\frac{pi-s_i}{d}\equiv -\frac{s_i}{d}\pmod p$.
Lemma~\ref{L:C_n}(2) then gives the congruence
\begin{equation}\label{E:equal}
C_n(c_i)\equiv C_n\left(-\frac{s_i}{d}\right)\pmod p.
\end{equation}
By Lemma~\ref{L:above}, every entry in row $i$
has $t$-degree at most $m-c_i$.
Hence each term in the Leibniz expansion of the determinant
has degree at most $E$,
and $h_E$ is obtained by taking the coefficient of $t^{m-c_i}$ from each entry in row $i$.
By Lemma~\ref{L:u_p}, $u_p,\nu_p$ are both $p$-adic units.
Thus, by \eqref{E:equal},
\begin{align}
h_E(\vA)
&=\det\left([t^{m-c_i}]M_{ij}(\vA,t)\right) \nonumber\\
&=\det\left(\binom{m}{c_i}\iota\left(C_{j-s_i}(c_i)\right)\right) \nonumber\\
&\equiv \left(\prod_{i=1}^g \binom{m}{c_i}\right)
\iota\left(\det\left(C_{j-s_i}\left(-\frac{s_i}{d}\right)\right)\right) \nonumber\\
&=u_p\Delta_k(\vA)
=\nu_p\Delta_k^\Z(\vA)\pmod p.\label{E:hE}
\end{align}
This proves the congruence.
By Proposition~\ref{P:Delta}, we have
$\bar{\Delta_k^\Z}(\vA)\ne 0$, so $\bar{h}_E(\vA)\ne 0$.

(2) For part (2), we have $k=1$, so Proposition~\ref{P:Delta} gives
$\Delta_1=1$. On the other hand, since $p\equiv 1\pmod d$ and $d=2g+1$,
we have $p=rd+1$ for some $r \ge 1$. If $r=1$, then
$p=d+1=2g+2$, which is not a prime. Hence $r\ge 2$, and therefore
\[
p\ge 2d+1\ge 2d-1.
\]
Thus \eqref{E:hE} yields
$\bar{h}_E(\vA)= \bar{u}_p\bar\Delta_1=\bar{u}_p$.
By Lemma~\ref{L:u_p}, $\bar{u}_p\ne 0$, hence $\bar{h}_E(\vA)=\bar{u}_p\ne 0$.
\end{proof}

\section{Generically ordinary hyperelliptic families are dense}
\label{S:5}

\subsection{Proof of Theorem~\ref{T:main}}

For any $\va=(a_1,\ldots,a_{d-1})\in{\bar\Z}^{d-1}$, recall
that $F=\Q(\va)$, and write $\cO_F$ for its ring of integers.
Observe $\Z[\va]\subseteq \cO_F$.
For any nonzero prime ideal $\fp\subset\cO_F$,
write $\kappa(\fp)$ for its residue field and
$p$ for its residue characteristic.
Let $\cO_{F,\fp}=S^{-1}\cO_F$ for $S=\cO_F\backslash\fp$,
that is, the local ring of $\cO_F$ at $\fp$.
 
Retain the notation of Corollary~\ref{C:U}.
Fix $k\in(\Z/d\Z)^\times$
and $\va\in U_{k,\Q}(\bar\Q)\cap\bar\Z^{d-1}$.
Then $\Delta_k^\Z(\va)\ne 0$.
Moreover, since $\Delta_k^\Z\in\Z[\vA]$ and the coordinates of $\va$
are algebraic integers, we have
$\Delta_k^\Z(\va)\in \cO_F\backslash \{0\}$.
Hence its norm $N_{F/\Q}(\Delta_k^\Z(\va))$ is a nonzero rational integer.
For any integer
$N\ge 2$, let $P_{\max}(N)$ denote the largest prime divisor of $N$
and set $P_{\max}(1)=1$.
Let
$
M_k(d,\va)
\coloneqq
\left|N_{F/\Q}\left(\Delta_k^\Z(\va)\right)\right|,$
and
\[
B_k(d,\va)\coloneqq\max\{2d-2,P_{\max}(M_k(d,\va))\}.
\]
If in addition $\va\in U_\Q(\bar\Q)$, define
$
M(d,\va)\coloneqq \prod_{k\in(\Z/d\Z)^\times} M_k(d,\va)$, and
\[
B(d,\va)\coloneqq\max\{2d-2,P_{\max}(M(d,\va))\}.
\]

\begin{theorem}[Theorem~\ref{T:main}]
\label{T:main-proof}
Let $d=2g+1$ with $g\ge 2$.
Let $U$ and $U_k$ be the subschemes constructed in Corollary~\ref{C:U}.
Then the following statements hold.
\begin{enumerate}
\item Let
$\va\in U_\Q(\bar\Q)\cap\bar\Z^{d-1}$ and $F=\Q(\va)$. Then
for every nonzero prime
ideal $\fp\subset\cO_F$ whose residue characteristic $p$ satisfies
$p>B(d,\va)$,
the reduction of
$
\cC_{\va}\longrightarrow \cS_\va^\circ
$
modulo $\fp$
is generically ordinary.
\item
Let $k\in(\Z/d\Z)^\times$,
$\va\in U_{k,\Q}(\bar\Q)\cap {\bar\Z}^{d-1}$, and $F=\Q(\va)$.
Then
for every nonzero prime ideal $\fp\subset \cO_F$
whose residue characteristic $p$ satisfies $p\equiv k\pmod d$
and $p>B_k(d,\va)$, the reduction of
$\cC_{\va}\longrightarrow \cS_\va^\circ$
modulo $\fp$ is generically ordinary.
\end{enumerate}
\end{theorem}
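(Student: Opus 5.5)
We will deduce part (2) first and then derive part (1) from it. Fix $k$, $\va\in U_{k,\Q}(\bar\Q)\cap\bar\Z^{d-1}$, and a prime $\fp\subset\cO_F$ of residue characteristic $p\equiv k\pmod d$ with $p>B_k(d,\va)$. Since $B_k(d,\va)\ge 2d-2$, we get $p\ge 2d-1$, and in particular $p>d$ and $p>2$. Hence the reduction of $\cC_\va$ modulo $\fp$ is the family $\cC_{\bar\va,\kappa(\fp)}\to\cS^\circ_{\bar\va,\kappa(\fp)}$ of Section~\ref{S:4}, where $\bar\va$ is the image of $\va$ in $\kappa(\fp)^{d-1}$. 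We need to check that the base of this reduction is compatible with the definition of $\cS_\va^\circ$. This is fine because $p\nmid d$: the reduction of $N_\va(t)$ modulo $\fp$ is the discriminant $N_{\bar\va}(t)$, which is nonzero with leading term $\pm d^dt^{d-1}$. So the generic point survives, and generic ordinarity of the reduction means ordinarity of $y^2=f_{\bar\va,t}(x)$ over $\bar{\kappa(\fp)(t)}$.

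By Remark~\ref{R:Hasse--Witt}, it suffices to show $\bar H_p(\bar\va,t)\ne0$ in $\kappa(\fp)[t]$. Write $\bar H_p(\vA,t)=\sum_{r\le E}\bar h_r(\vA)t^r$ as in \eqref{E:HH}. Specializing at $\bar\va$ gives a polynomial whose $t^E$-coefficient is $\bar h_E(\bar\va)$. By Proposition~\ref{P:H}(1), which applies since $p\ge 2d-1$ and $p\equiv k\pmod d$, we have $h_E\equiv\nu_p\Delta_k^\Z\pmod p$ in $\Z_{(p)}[\vA]$. Both sides lie in $\Z_{(p)}[\vA]$ and $\va$ has coordinates in $\cO_F\subset\cO_{F,\fp}$, so evaluating at $\va$ and reducing modulo $\fp$ yields $\bar h_E(\bar\va)=\bar\nu_p\cdot\overline{\Delta_k^\Z(\va)}$ in $\kappa(\fp)$. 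Here $\bar\nu_p\ne0$ by Lemma~\ref{L:u_p}.

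The remaining and essential point is $\Delta_k^\Z(\va)\notin\fp$. Suppose instead that $\Delta_k^\Z(\va)\in\fp$. Then $\fp$ divides the principal ideal $(\Delta_k^\Z(\va))$, so $p$ divides its norm $M_k(d,\va)=|N_{F/\Q}(\Delta_k^\Z(\va))|$, which is a nonzero integer because $\va\in U_{k,\Q}$. This gives $p\le P_{\max}(M_k(d,\va))\le B_k(d,\va)$, a contradiction. Hence $\bar h_E(\bar\va)\ne0$, so $\bar H_p(\bar\va,t)\ne0$, and the family is generically ordinary. This proves (2).

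For (1), let $\va\in U_\Q(\bar\Q)$, so $\va\in U_{k,\Q}$ for every $k$. Given $\fp$ with $p>B(d,\va)$, let $k$ be the class of $p$ modulo $d$, which is a unit since $p>d$. Each $M_k(d,\va)$ divides $M(d,\va)$, so $P_{\max}(M_k)\le P_{\max}(M)$ and therefore $B_k(d,\va)\le B(d,\va)<p$. Part (2) then applies. The main subtlety is the compatibility of reductions: we must check that $h_E\equiv\nu_p\Delta_k^\Z$ holds as an identity in $\Z_{(p)}[\vA]$, not merely modulo $p\Z[\vA]$ after clearing $\alpha$. This holds because $\alpha$ and $u_p$ are $p$-adic units, so specialization to $\cO_{F,\fp}$ respects the identity.
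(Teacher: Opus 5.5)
Your proof is correct and follows the paper's argument. You specialize the congruence $h_E\equiv\nu_p\Delta_k^\Z \pmod p$ from Proposition~\ref{P:H} at $\va$, and use the norm argument to show $\Delta_k^\Z(\va)\notin\fp$. The only difference is organizational: you prove (2) first and deduce (1) from $B_k(d,\va)\le B(d,\va)$, whereas the paper proves (1) directly and then repeats the same argument for (2).
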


\begin{proof}
Let $\va\in U_\Q(\bar\Q)\cap\bar\Z^{d-1}$.
Then for every $k$ we have
$\Delta_k^\Z(\va)\ne 0$,
and it lies in $\cO_F$ because $\Delta_k^\Z$ has integral coefficients
and the coordinates of $\va$ are algebraic integers.
Let $\fp\subset \cO_F$ lie above a rational prime
$p>B(d,\va)$.
Since $p>d$, the residue class of $p\pmod d$ is a unit.
Let $k\in(\Z/d\Z)^\times$ denote this residue class.
Since $p>P_{\max}(M_k(d,\va))$, we have
$p\nmid N_{F/\Q}(\Delta_k^\Z(\va))$, hence
\begin{equation}
\label{E:DD}
\Delta_k^\Z(\va)\in\cO_{F,\fp}^\times.
\end{equation}
Indeed, if $\Delta_k^\Z(\va)\in\fp$, then
$\fp\mid (\Delta_k^\Z(\va))$. Taking ideal norms shows
$p\mid N_{F/\Q}(\Delta_k^\Z(\va))$.

Since $p>B(d,\va)\ge 2d-2$,
we have $p\ge 2d-1$, so Proposition~\ref{P:H}
applies. We have
\[
h_E(\vA)- \nu_p\Delta_k^\Z(\vA)\in p\Z_{(p)}[\vA].
\]
Evaluating at $\vA=\va$, and using $p\in \fp$, we have
\[
h_E(\va)-\nu_p\Delta_k^\Z(\va)\in
p\Z_{(p)}[\va]\subseteq p\cO_{F,\fp}
\subseteq \fp\cO_{F,\fp}.
\]
By Lemma~\ref{L:u_p},
$
\nu_p\in\Z_{(p)}^\times\subseteq \cO_{F,\fp}^\times,
$
and by \eqref{E:DD},
$\Delta_k^\Z(\va)\in\cO_{F,\fp}^\times$. It follows that
\[
h_E(\va)\equiv \nu_p\Delta_k^\Z(\va) \not\equiv 0 \pmod \fp.
\]
Therefore $H_p(\va,t)\not\equiv 0\pmod \fp$.
Thus the reduction of
$\cC_\va\longrightarrow \cS_\va^\circ$ modulo $\fp$
is generically ordinary.

For the second assertion,
let $\va\in U_{k,\Q}(\bar\Q)\cap {\bar\Z}^{d-1}$,
and let $\fp\subset \cO_F$ be a prime ideal whose residue characteristic $p$ satisfies
$p>B_k(d,\va)$ and $p\equiv k\pmod d$.
Then $p\nmid N_{F/\Q}(\Delta_k^\Z(\va))$,
so $\Delta_k^\Z(\va)\in \cO_{F,\fp}^\times$.
Repeating the preceding argument gives
$
h_E(\va)\equiv \nu_p\Delta_k^\Z(\va)
\not\equiv 0\pmod \fp,
$ and hence $H_p(\va,t)\not\equiv 0\pmod \fp$.
Thus the family modulo $\fp$ is generically ordinary.
\end{proof}

\begin{remark}
The result of \cite{Zhu26} concerns one particular
coefficient vector $\va=(-d,0,\ldots,0)$
and gives an explicit quadratic bound
depending only on $d$.
In the present paper, the Hasse--Witt calculation has the uniform linear threshold
$p\ge 2d-1$
on the universal coefficient space.
After fixing a coefficient vector $\va\in U_\Q(\bar\Q)\cap{\bar\Z}^{d-1}$,
however,
one must also exclude the finitely many rational primes dividing the norms
of the values $\Delta_k^\Z(\va)$.
Thus the final bound $B(d,\va)$
for a fixed specialization $\va$
depends on both $d$ and $\va$.
\end{remark}

\begin{remark}
Generic ordinarity is invariant under extension
of the ground field, so we may freely extend
the base field in the proofs of this section.
\end{remark}

\subsection{Locus of generically ordinary families}

Fix a prime $p\ge 2d-1$, and let $k_p\in(\Z/d\Z)^\times$ be the residue class
of $p$ modulo $d$.
Let $u_p,\nu_p$, and $E$ be the quantities defined in
Subsection~\ref{S:3.2} and Section~\ref{S:4}, with $k=k_p$.

\begin{theorem}[Theorem~\ref{T:main2}]
\label{T:main2-proof}
Let $p\ge 2d-1$ be a prime.
Let $\GO_p$ be the locus of generically ordinary one-parameter
hyperelliptic families $\cC_{\va,\F_p}\rightarrow\cS_{\va,\F_p}^\circ$
in the coefficient space $\A_{\F_p}^{d-1}$.
Let $k_p\in(\Z/d\Z)^\times$ denote the residue class of $p\pmod d$.
Then
$$\GO_p\supseteq U_{k_p,\F_p}\supseteq U_{\F_p}
$$
and $\GO_p$ is a Zariski-open dense subset of $\A_{\F_p}^{d-1}$.
In particular, for every $\va\in U_{k_p,\F_p}(\bar\F_p)$,
the family
$\cC_{\va,\bar\F_p}\longrightarrow \cS_{\va,\bar\F_p}^\circ$
is generically ordinary.
\end{theorem}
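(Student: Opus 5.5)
The plan is to describe $\GO_p$ as the non-vanishing locus of the reduced universal Hasse--Witt polynomial $\bar H_p(\vA,t)$ from \eqref{E:HH}, and then to exhibit $U_{k_p,\F_p}$ inside it via the leading coefficient $\bar h_E$. By Remark~\ref{R:Hasse--Witt}, for any field $K\supseteq\F_p$ and any $\va\in K^{d-1}$, the family $\cC_{\va,K}\to\cS^\circ_{\va,K}$ is generically ordinary if and only if $\bar H_p(\va,t)\neq 0$ in $K[t]$. Writing $\bar H_p=\sum_{r=0}^E\bar h_r(\vA)t^r$, this says exactly that $\va$ lies outside the common zero locus $V(\bar h_0,\ldots,\bar h_E)$. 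Hence
\[
\GO_p=\A^{d-1}_{\F_p}\setminus V(\bar h_0,\ldots,\bar h_E)=\bigcup_{r=0}^E D(\bar h_r),
\]
which is Zariski-open. One point to check is that this is independent of the choice of $K$, because generic ordinarity is insensitive to extending the ground field.

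Next I would prove the inclusion $U_{k_p,\F_p}\subseteq\GO_p$. Since $p\ge 2d-1$, Proposition~\ref{P:H}(1) applies with $k=k_p$ and gives $\bar h_E=\bar\nu_p\,\bar{\Delta^\Z_{k_p}}$, where $\bar\nu_p\in\F_p^\times$ by Lemma~\ref{L:u_p}. So if $\va\in U_{k_p,\F_p}(\bar\F_p)$, then $\bar{\Delta^\Z_{k_p}}(\va)\ne0$, hence $\bar h_E(\va)\neq0$, hence $\bar H_p(\va,t)\ne0$, and $\va\in\GO_p$. In scheme terms,
\[
D(\bar h_E)=D(\bar{\Delta^\Z_{k_p}})=U_{k_p,\F_p}\subseteq \bigcup_r D(\bar h_r)=\GO_p .
\]
The inclusion $U_{\F_p}\subseteq U_{k_p,\F_p}$ is immediate because $U$ is defined as the intersection of all the $U_k$. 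This same argument also gives the final ``in particular'' clause about $\bar\F_p$-points.

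For density, $p\ge 2d-1>d$, so Corollary~\ref{C:U} shows that $U_{k_p,\F_p}$ is nonempty, open and dense in the irreducible space $\A^{d-1}_{\F_p}$. Therefore $\GO_p$ contains a dense open subset and is itself open, so it is open and dense.

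The main obstacle is not computational; all the content sits in Proposition~\ref{P:H}. What needs care is the identification of $\GO_p$ with the set $\bigcup_r D(\bar h_r)$, in particular making sure the Hasse--Witt criterion applies for every $\va$. That requires $f_{\va,t}$ to be square-free over $K(t)$, which holds because $p\nmid d$ and so $\disc_x f_{\va,t}\ne 0$, as noted at the start of Section~\ref{S:4}.
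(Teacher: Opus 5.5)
Your proposal is correct and follows the paper's proof essentially step for step. You identify $\GO_p$ with $\bigcup_{r=0}^E D(\bar h_r)$ via the Hasse--Witt criterion, use Proposition~\ref{P:H}(1) to get $D(\bar h_E)=U_{k_p,\F_p}$, and deduce density from Corollary~\ref{C:U}; the only cosmetic difference is that you cite density of $U_{k_p,\F_p}$ directly, whereas the paper cites density of the smaller set $U_{\F_p}$.
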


\begin{proof}
For a geometric point $\va\in {\bar\F}_p^{d-1}$,
equation \eqref{E:HH} gives
\[
\bar{H}_p(\va,t)\ne 0
\quad\Longleftrightarrow\quad
\bar{h}_r(\va)\ne 0 \text{ for some }0\le r\le E.
\]
By Remark~\ref{R:Hasse--Witt}, we have
\[
\GO_p=\bigcup_{r=0}^E D(\bar{h}_r)\subseteq \A_{\F_p}^{d-1}.
\]
So $\GO_p$ is Zariski-open in $\A_{\F_p}^{d-1}$.

By Proposition~\ref{P:H},
$
\bar{h}_E=\bar{\nu}_p\bar{\Delta_{k_p}^\Z}.
$
Since $\bar\nu_p\in\F_p^\times$,
it follows that
\[
D(\bar{h}_E)=D(\bar{\Delta_{k_p}^\Z})=U_{k_p,\F_p}.
\]
Thus $\GO_p\supseteq U_{k_p,\F_p}\supseteq U_{\F_p}$.
By Corollary~\ref{C:U}, $U_{\F_p}$ is Zariski-open and dense,
hence $\GO_p$ is dense.
Combining this with the above,
$\GO_p$ is Zariski-open and dense.

The last assertion follows from the above inclusion
$U_{k_p,\F_p}\subseteq \GO_p$.
\end{proof}

\subsection{\texorpdfstring{Stronger results for primes $p\equiv 1\pmod d$}{Stronger results for primes congruent to 1 modulo d}}

We now treat the special case $p\equiv 1\pmod d$.
The special case $\va=(a_1,0,\ldots,0)$ was proved in \cite[Corollary 1.6]{Zhu26}.

\begin{theorem}[Theorem~\ref{T:main3}]
Let $d=2g+1$ with $g\ge 2$.
\begin{enumerate}
\item Let $\va=(a_1,\ldots,a_{d-1})\in\bar\Z^{d-1}$ be an arbitrary coefficient vector.
Let $L$ be a finite extension of $\Q(\va)$ and let
$\cC_{\va,\cO_L}\longrightarrow \cS_{\va,\cO_L}^\circ$
denote the base change to $\cO_L$.
Then for every nonzero prime ideal $\fP\subset \cO_L$ whose residue characteristic $p$ satisfies $p\equiv 1\pmod d$,
the reduction of $\cC_{\va,\cO_L}\longrightarrow\cS_{\va,\cO_L}^\circ$
modulo $\fP$ is generically ordinary.

\item For every prime $p\equiv 1\pmod d$
and every coefficient vector $\va\in\bar\F_p^{d-1}$,
the corresponding hyperelliptic family
is generically ordinary.
That is, $\GO_p=\A_{\F_p}^{d-1}$.
\end{enumerate}
\end{theorem}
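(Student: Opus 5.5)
The plan is to derive both parts from Proposition~\ref{P:H}(2): for a prime $p\equiv 1\pmod d$, the leading coefficient of $H_p(\vA,t)$ in $t$ satisfies $\bar h_E(\vA)=\bar u_p$, a nonzero constant in $\F_p$. The point is that the degree-$E$ coefficient of $\bar H_p$ has no dependence on the coefficient vector. Any specialization of $\vA$ therefore leaves it unchanged and nonzero. By Remark~\ref{R:Hasse--Witt}, this forces generic ordinarity.

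For part (2), fix $p\equiv 1\pmod d$ and $\va\in\bar\F_p^{d-1}$. The proof of Proposition~\ref{P:H}(2) shows $p\ge 2d+1$, so in particular $p>d$. Hence $p\nmid d$, $N_\va(t)\ne 0$, and the family $\cC_{\va,\bar\F_p}$ is defined. Specializing \eqref{E:HH} at $\vA=\va$ gives
\[
\bar H_p(\va,t)=\sum_{r=0}^E \bar h_r(\va)t^r,
\]
whose $t^E$-coefficient is $\bar h_E(\va)=\bar u_p\ne 0$. Thus $\bar H_p(\va,t)\ne0$ in $\bar\F_p[t]$, and Remark~\ref{R:Hasse--Witt} shows the family is generically ordinary. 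In the language of Theorem~\ref{T:main2-proof}, $D(\bar h_E)=\A^{d-1}_{\F_p}$, so $\GO_p=\A^{d-1}_{\F_p}$.

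For part (1), take $\va\in\bar\Z^{d-1}$, a finite extension $L\supseteq\Q(\va)$, and a prime $\fP\subset\cO_L$ over $p\equiv1\pmod d$. The coordinates of $\va$ lie in $\cO_L$, so reducing modulo $\fP$ gives a point $\bar\va\in\kappa(\fP)^{d-1}\subseteq\bar\F_p^{d-1}$. The reduction of $\cC_{\va,\cO_L}$ modulo $\fP$ is the family attached to $\bar\va$: we have $p>2$, and $N_\va(t)$ reduces to $N_{\bar\va}(t)\ne 0$, its leading term being $\pm d^dt^{d-1}$ with $p\nmid d$. Moreover, the reduction of $H_p(\va,t)$ modulo $\fP$ equals $\bar H_p(\bar\va,t)$, because $H_p\in\Z[\vA][t]$. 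Part (2), or directly $h_E(\va)\equiv u_p\not\equiv0\pmod{\fP}$ since $u_p\in\Z_{(p)}^\times\subseteq\cO_{L,\fP}^\times$, then gives generic ordinarity.

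There is no real obstacle here. The only points needing care are the bookkeeping that reduction modulo $\fP$ commutes with forming the Hasse--Witt determinant, and the check that $\cS^\circ$ has nonempty special fiber. The only possible subtlety is the hypothesis $p\ge 2d-1$ needed for Lemma~\ref{L:u_p}, and it is already handled inside Proposition~\ref{P:H}(2).
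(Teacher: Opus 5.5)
Your proposal is correct and takes essentially the same route as the paper. Proposition~\ref{P:H}(2) makes the $t^E$-coefficient of $\bar H_p$ the nonzero constant $\bar u_p$, independent of $\vA$, so every specialization (in particular the reduction $\bar\va$ of $\va$ modulo $\fP$) gives $\bar H_p(\bar\va,t)\ne 0$, and Remark~\ref{R:Hasse--Witt} then yields generic ordinarity; the only differences are that you prove (2) first and deduce (1) from it, and you spell out bookkeeping the paper leaves implicit ($p\ge 2d+1$, $N_{\bar\va}(t)\ne 0$, and compatibility of reduction with forming $H_p$).
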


\begin{proof}
(1) Put $\bar\va=(a_1\bmod \fP,\ldots,a_{d-1}\bmod \fP)$
in $\kappa(\fP)^{d-1}$.
By Proposition~\ref{P:H}(2) we have
$\bar{h}_E(\vA)=\bar{u}_p$, hence
$\bar{h}_E(\bar\va)=\bar{u}_p\ne 0$ in $\kappa(\fP)$.
Therefore,
$\bar{H}_p(\bar\va,t)$ has leading term $\bar{u}_p t^E$, which is nonzero.
By Remark~\ref{R:Hasse--Witt},
the reduction of the family modulo $\fP$
is therefore generically ordinary.

(2) Let $\va=(a_1,\ldots,a_{d-1})$ be an arbitrary coefficient vector in $\bar\F_p^{d-1}$.
By the same argument as in part~(1),
since the leading coefficient $\bar{h}_E$ of $\bar{H}_p$ is a nonzero constant,
$\bar{H}_p(\va,t)\ne 0$.
Hence the corresponding family is generically ordinary.
Therefore, $\GO_p=\A_{\F_p}^{d-1}$.
\end{proof}

\section{Density of ordinary primes}
\label{S:6}

Let $L$ be a number field, and let $C/L(t)$ be the smooth projective
hyperelliptic curve with affine equation
\[
C: y^2=f_{\va,t}(x), \qquad \va\in\cO_L^{d-1}.
\]
For every nonzero prime ideal $\fP\subset\cO_L$, let
$N\fP=\#(\cO_L/\fP)$. If $\fP$ lies above the rational prime $p$, let
$C_{\fP}/\kappa(\fP)(t)$ denote the curve obtained by reducing the
coefficients of $f_{\va,t}$ modulo $\fP$. If $p\nmid 2d$, then
$C_{\fP}$ is smooth. We call $\fP$ an \emph{ordinary prime} of $C/L(t)$
if $C_{\fP}$ is ordinary.
Define the set of ordinary primes as
\[
\Ord(C/L(t))
=
\left\{
\fP\in\Spec\cO_L\backslash \{(0)\}
\,:\,
\operatorname{char}\kappa(\fP)\nmid 2d,\ C_{\fP}\text{ is ordinary}
\right\}.
\]
Equivalently, $\fP$ is ordinary if and only if the reduction
of the corresponding one-parameter family modulo $\fP$ is
generically ordinary. We shall study the distribution of this set.

Let $\Pi_L$ denote the set of nonzero prime ideals of $\cO_L$.
For any real number $X>0$,
let
\[
\Pi_L(X)\coloneqq
\{\fP\in\Pi_L:N\fP\le X\}.
\]
If $\Sigma$ is a set of nonzero prime ideals of
$\cO_L$, we recall that its \emph{(natural) lower density} is
\[
\underline\delta_L(\Sigma)
\coloneqq
\liminf_{X\rightarrow \infty}
\frac
{\#\{\fP\in \Sigma: N\fP\le X\}}
{\#\Pi_L(X)}.
\]
The \emph{(natural) density} of $\Sigma$ is
\[
\delta_L(\Sigma)
\coloneqq
\lim_{X\rightarrow \infty}
\frac
{\#\{\fP\in \Sigma : N\fP\le X\}}{\#\Pi_L(X)},
\]
if the limit exists.

\begin{corollary}[Corollary~\ref{C:generic-density}]
Let $\va\in U_\Q(\bar\Q)\cap\bar\Z^{d-1}$, put $F=\Q(\va)$, and let
$C_\va/F(t)$ be the hyperelliptic curve with affine equation
$y^2=f_{\va,t}(x)$. Then
\[
\delta_F\bigl(\Ord(C_\va/F(t))\bigr)=1.
\]
\end{corollary}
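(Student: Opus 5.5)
The plan is to deduce the density statement directly from Theorem~\ref{T:main-proof}(1). That theorem says every prime $\fp\subset\cO_F$ whose residue characteristic exceeds $B(d,\va)$ is an ordinary prime. So the complement of $\Ord(C_\va/F(t))$ in $\Pi_F$ is contained in a finite set:
\[
\Pi_F\setminus \Ord(C_\va/F(t))\subseteq \{\fp\in\Pi_F:\ \operatorname{char}\kappa(\fp)\le B(d,\va)\}.
\]
First I will check that the membership conditions in the definition of $\Ord$ are compatible with Theorem~\ref{T:main-proof}. Since $B(d,\va)\ge 2d-2>2d$ fails only for tiny $d$, I will note instead that $p>B(d,\va)\ge 2d-2$ gives $p\ge 2d-1>d$, and $p$ is odd. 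Hence $p\nmid 2d$.

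Next I will verify that ``the reduction of the family modulo $\fp$ is generically ordinary'' agrees with ``$C_\fp$ is ordinary''. The paper records this equivalence just before the definition of $\Ord$. Both conditions amount to $\bar H_p(\bar\va,t)\neq0$ in $\kappa(\fp)[t]$, by Remark~\ref{R:Hasse--Witt}, since the geometric generic fiber of the family is the base change of $C_\fp$ to an algebraic closure of $\kappa(\fp)(t)$.

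The set on the right side of the display is finite. There are finitely many rational primes $p\le B(d,\va)$, and each has at most $[F:\Q]$ primes of $\cO_F$ above it. Call its cardinality $c$. Then for every $X$,
\[
1\ \ge\ \frac{\#\{\fp\in\Ord(C_\va/F(t)):N\fp\le X\}}{\#\Pi_F(X)}\ \ge\ 1-\frac{c}{\#\Pi_F(X)}.
\]
Since $\#\Pi_F(X)\to\infty$ (there are infinitely many prime ideals, e.g.\ by Landau's prime ideal theorem, or simply because infinitely many rational primes each have a prime above them), the ratio tends to $1$. Hence the limit exists and $\delta_F(\Ord(C_\va/F(t)))=1$.

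No step is a genuine obstacle. The only point needing care is the bookkeeping above: matching the definition of an ordinary prime with the generic ordinarity proved in Theorem~\ref{T:main-proof}, and confirming that the excluded primes (those dividing $2d$ and those below the bound) form a finite set.
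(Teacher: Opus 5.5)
Your proposal is correct and is essentially the paper's argument: Theorem~\ref{T:main} places $\Pi_F\setminus\Ord(C_\va/F(t))$ inside the finite set of primes of residue characteristic at most $B(d,\va)$, so the density is $1$. One small slip: the aside ``$B(d,\va)\ge 2d-2>2d$ fails only for tiny $d$'' is meaningless, since $2d-2>2d$ never holds, but the reasoning you substitute for it ($p\ge 2d-1>d$ and $p$ odd, hence $p\nmid 2d$) is correct and suffices.
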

\begin{proof}
Let $\va\in U_\Q(\bar\Q)\cap\bar\Z^{d-1}$.
By Theorem~\ref{T:main},
$(C_\va)_\fp$ is ordinary for all
$\fp\in\Pi_F$ whose residue characteristic $p$ satisfies $p>B(d,\va)$.
Thus $\Pi_F\backslash\Ord(C_\va/F(t))$ is contained in
$\{\fp\in\Pi_F:p\le B(d,\va)\}$, which is finite. Therefore
$\delta_F(\Ord(C_\va/F(t)))=1$.
\end{proof}

For a nonzero prime ideal $\fP\subset \cO_L$ lying over $p$,
write $\mf(\fP/p)=[\kappa(\fP):\F_p]$ for its
absolute residue degree. Note that $N\fP=p^{\mf(\fP/p)}$.
Set
\[
\cP_{1,L}:=\{\fP\in\Pi_L :\mf(\fP/p)=1\}, \qquad
\cP_{2^+,L}:=\{\fP\in\Pi_L :\mf(\fP/p)\ge 2\}.
\]

\begin{lemma}\label{L:degree1}
For any number field $L$, the density of $\cP_{1,L}$ is $1$,
and that of $\cP_{2^+,L}$ is $0$.
\end{lemma}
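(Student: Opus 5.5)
The plan is to show that $\cP_{2^+,L}$ has density $0$; the statement for $\cP_{1,L}$ then follows because $\Pi_L=\cP_{1,L}\sqcup\cP_{2^+,L}$. The key input is the prime ideal theorem for $L$ (Landau): $\#\Pi_L(X)\sim X/\log X$ as $X\to\infty$. In fact only a lower bound $\#\Pi_L(X)\gg X/\log X$ is needed, and it could even be obtained from $\cP_{1,L}\supseteq$ primes above split rational primes together with Chebotarev. The simplest route is to quote Landau directly.

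For the upper bound on $\cP_{2^+,L}$, I will count by rational primes. Let $n=[L:\Q]$. Over each rational prime $p$ there are at most $n$ primes $\fP$. If $\fP\in\cP_{2^+,L}$ and $N\fP\le X$, then $p^2\le p^{\mf(\fP/p)}=N\fP\le X$, so $p\le X^{1/2}$. Hence
\[
\#\{\fP\in\cP_{2^+,L}:N\fP\le X\}\le n\,\pi(X^{1/2})\le n X^{1/2}.
\]
Dividing by $\#\Pi_L(X)\sim X/\log X$ gives a ratio of order $n X^{-1/2}\log X\to 0$. So the limit exists and equals $0$, that is, $\delta_L(\cP_{2^+,L})=0$.

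Finally, $\#\{\fP\in\cP_{1,L}:N\fP\le X\}=\#\Pi_L(X)-\#\{\fP\in\cP_{2^+,L}:N\fP\le X\}$. Dividing by $\#\Pi_L(X)$ therefore shows that the ratio for $\cP_{1,L}$ tends to $1$, so $\delta_L(\cP_{1,L})=1$.

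I expect no real obstacle here. The only non-elementary ingredient is the asymptotic (or merely a lower bound of order $X/\log X$) for $\#\Pi_L(X)$, and this is exactly the prime ideal theorem, which I will cite rather than prove. The remaining steps are the trivial bound of at most $n$ primes above each $p$ and Chebyshev-type estimates for $\pi(X^{1/2})$.
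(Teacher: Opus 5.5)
Your proposal is correct and follows the paper's proof essentially verbatim: you bound the number of primes of residue degree at least $2$ with norm at most $X$ by $[L:\Q]\,\pi(X^{1/2})$, compare this with the prime ideal theorem $\#\Pi_L(X)\sim X/\log X$, and pass to the complement to handle $\cP_{1,L}$. Using the cruder bound $\pi(X^{1/2})\le X^{1/2}$ in place of $O(\sqrt{X}/\log X)$ changes nothing, since the ratio still tends to $0$.
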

\begin{proof}
It suffices to show $\delta_L(\cP_{2^+,L})=0$.
If $\fP\in \cP_{2^+,L}$ and $N\fP\le X$,
then $p\le \sqrt{X}$. Hence
\[
\#\{\fP\in\cP_{2^+,L}: N\fP\le X\} \le [L:\Q]\#\Pi_\Q(\sqrt{X})
=O_L(\sqrt{X}/\log X),
\]
whereas the prime ideal theorem gives
$\#\Pi_L(X)\sim\operatorname{Li}(X)$.
This shows the density of $\cP_{2^+,L}$ is $0$.
\end{proof}

\begin{theorem}[A refinement of Theorem~\ref{T:density}]
Let $d=2g+1$ with $g\ge 2$.
Let $\va=(a_1,\ldots,a_{d-1})\in{\bar\Z}^{d-1}$
and $F=\Q(\va)$.
Let $C/F(t)$ be the smooth projective hyperelliptic curve
given by the affine equation
\[
y^2=x^d+a_{d-1}x^{d-1}+\cdots+a_1x+t.
\]
Let $L=F(\zeta_d)$, and identify
 $H=\Gal(L/F)$ with a subgroup of $(\Z/d\Z)^\times$ via its action on $\zeta_d$, and
put
\[
\cK_\va\coloneqq \{k\in H:\Delta_k^\Z(\va)\ne 0\}.
\]
\begin{enumerate}
\item
The lower density of ordinary primes for $C/F(t)$ satisfies
\[
\underline\delta_F(\Ord(C/F(t)))\ge \frac{|\cK_\va|}{[L:F]}\ge \frac{1}{[L:F]}.
\]
\item
Let $C_L\coloneqq C\times_{F(t)}L(t)$. Then the density of ordinary primes for $C_L/L(t)$ is $1$.
\end{enumerate}
\end{theorem}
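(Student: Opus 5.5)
The plan is to reduce everything to degree-one primes and then apply Theorem~\ref{T:main}(2) residue class by residue class, with Chebotarev supplying the distribution of residue classes.

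For (1), fix $k\in\cK_\va$. Then $\Delta_k^\Z(\va)\ne 0$, so $\va\in U_{k,\Q}(\bar\Q)\cap\bar\Z^{d-1}$. By Theorem~\ref{T:main}(2), every $\fp\in\Pi_F$ with residue characteristic $p\equiv k\pmod d$ and $p>B_k(d,\va)$ lies in $\Ord(C/F(t))$. It therefore suffices to bound from below the density of the set
\[
\Sigma_k\coloneqq\{\fp\in\cP_{1,F}:\ p\equiv k \pmod d\}.
\]
The restriction to $\cP_{1,F}$ costs nothing by Lemma~\ref{L:degree1}, and discarding the finitely many $\fp$ with $p\le B_k(d,\va)$ does not change the density.

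The key observation concerns a degree-one prime $\fp$ of $F$ unramified in $L$ (all but finitely many are). Its Frobenius in $\Gal(L/F)\subseteq(\Z/d\Z)^\times$ acts on $\zeta_d$ by $\zeta_d\mapsto\zeta_d^{N\fp}=\zeta_d^{p}$. Hence $\Frob_\fp$ corresponds exactly to the class $p\bmod d$, and $\Sigma_k$ coincides, up to finite sets and density-zero sets, with $\{\fp:\Frob_\fp=k\}$. The Chebotarev density theorem for the abelian extension $L/F$, in its natural-density form, gives this set density $1/[L:F]$.

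The sets $\Sigma_k$ for distinct $k$ are disjoint. Summing over $k\in\cK_\va$ yields $\underline\delta_F(\Ord)\ge|\cK_\va|/[L:F]$. It remains to check $|\cK_\va|\ge1$: the identity $1\in H$ always lies in $\cK_\va$, since Proposition~\ref{P:Delta} gives $\Delta_1=1$, hence $\Delta_1^\Z=\alpha^g\ne0$. (Alternatively, Theorem~\ref{T:main3}(1) directly handles all primes with $p\equiv1\pmod d$.)

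For (2), apply Theorem~\ref{T:main3}(1) with base field $L$. Every $\fP\in\Pi_L$ whose residue characteristic satisfies $p\equiv1\pmod d$ is an ordinary prime of $C_L/L(t)$. Now take $\fP\in\cP_{1,L}$ unramified over $\Q$ with $p\nmid d$. Then $\kappa(\fP)=\F_p$ contains the image of $\zeta_d$, a primitive $d$-th root of unity, since $p\nmid d$ makes the reduction injective on $\mu_d$. Hence $d\mid p-1$. So $\cP_{1,L}$ minus a finite set is contained in $\Ord(C_L/L(t))$, and Lemma~\ref{L:degree1} gives density $1$.

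I expect the main obstacle to be the density statement in (1). This needs Chebotarev with natural rather than Dirichlet density, and a careful identification of Frobenius with $p\bmod d$, including the embedding $H\hookrightarrow(\Z/d\Z)^\times$ and the handling of ramified and higher-degree primes. The remaining steps are direct applications of earlier results.
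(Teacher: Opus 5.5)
Your proposal is correct and follows essentially the same route as the paper. For (1) you apply Theorem~\ref{T:main}(2) to each $k\in\cK_\va$, identify $\Frob_\fp$ with $p \bmod d$ on unramified degree-one primes, use Chebotarev together with Lemma~\ref{L:degree1}, and sum over the disjoint classes; for (2) you note that degree-one primes of $L$ with $p\nmid d$ force $p\equiv 1\pmod d$ and then invoke Theorem~\ref{T:main3}(1), with the only cosmetic difference being that the paper proves the injectivity of reduction on prime-to-$p$ roots of unity, which you cite as standard.
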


\begin{proof}
(1)
Let $\va\in\bar\Z^{d-1}$.
From Proposition~\ref{P:Delta} we have $\Delta_1=1$, and hence
\[
\Delta_1^\Z=\alpha^g\Delta_1=\alpha^g\ne 0.
\]
Therefore $1\in\cK_\va$, so in particular $|\cK_\va|\ge 1$.

For every $k\in\cK_\va$, we have $\va\in U_{k,\Q}(\bar\Q)\cap\bar\Z^{d-1}$.
By Theorem~\ref{T:main}(2), if we write $p$ for the rational prime lying under $\fp$, then
\begin{equation}\label{E:fp}
\{\fp\in\Pi_F:p\equiv k\pmod d\}\backslash\Ord(C/F(t))
\text{ is finite.}
\end{equation}
Write
\[
\cP_{1,F,k}\coloneqq \{\fp\in\cP_{1,F}:p\equiv k\pmod d\}.
\]
Then \eqref{E:fp} gives
$\cP_{1,F,k}\backslash\Ord(C/F(t))$ is finite. That is
\begin{equation}\label{E:finite}
\cP_{1,F,k}\text{ and }\Ord(C/F(t))\cap\cP_{1,F,k}
\text{ differ by a finite set}.
\end{equation}
Since $L=F(\zeta_d)$, the cyclotomic extension $L/F$ is unramified away from
the primes above $d$. Set
\[
\cP_F^{(d)}\coloneqq \{\fp\in\Pi_F: \fp\mid d\cO_F\}.
\]
This is a finite set containing every prime that ramifies in $L/F$.
For every prime $\fp\in \cP_{1,F}\backslash \cP_F^{(d)}$ we have
$N\fp=p$, and the arithmetic Frobenius $\Frob_\fp\in H$ acts as
\[
\Frob_\fp(\zeta_d)=\zeta_d^{N\fp}=\zeta_d^p.
\]
Under the identification $H\hookrightarrow (\Z/d\Z)^\times$,
it follows that, for every $k\in H$,
\[
{\Frob}_\fp=k \quad\Longleftrightarrow \quad \zeta_d^p=\zeta_d^k \quad
\Longleftrightarrow
\quad p\equiv k\pmod d.
\]
Consequently,
\[
\cP_{1,F,k}\backslash\cP_F^{(d)}
=(\cP_{1,F}\backslash\cP_F^{(d)})
\cap\{\fp\in\Pi_F\backslash\cP_F^{(d)}:\Frob_\fp=k\}.
\]
Then
\begin{equation}\label{E:delta1}
\cP_{1,F,k}\text{ and }
\cP_{1,F}\cap\{\fp\in\Pi_F\backslash\cP_F^{(d)}:\Frob_\fp=k\}
\text{ differ by a finite set.}
\end{equation}
On the other hand, since $H$ is abelian, the Chebotarev density theorem gives
$$
\delta_F(\{\fp\in\Pi_F\backslash\cP_F^{(d)}:\Frob_\fp=k\})=\frac{1}{|H|}.
$$
Since we have a partition $\Pi_F=\cP_{1,F}\sqcup \cP_{2^+,F}$,
and $\delta_F(\cP_{1,F})=1$ and $\delta_F(\cP_{2^+,F})=0$
by Lemma~\ref{L:degree1}, we have
\begin{equation}\label{E:delta2}
\delta_F(\cP_{1,F}\cap\{\fp\in\Pi_F\backslash\cP_F^{(d)}:\Frob_\fp=k\})=\frac{1}{|H|}.
\end{equation}
Combining \eqref{E:finite}, \eqref{E:delta1}, and~\eqref{E:delta2}, we conclude
\[
\delta_F(\Ord(C/F(t))\cap \cP_{1,F,k})=\frac{1}{|H|}.
\]
Since $\bigsqcup_{k\in \cK_\va}\cP_{1,F,k}$ is a disjoint union,
we have
\[
\delta_F\left(\Ord(C/F(t))\cap \bigsqcup_{k\in\cK_\va}\cP_{1,F,k}\right)
=\sum_{k\in\cK_\va}\delta_F\left(\Ord(C/F(t))\cap\cP_{1,F,k}\right)
=\frac{|\cK_\va|}{|H|}.
\]
But $|\cK_\va|\ge1$, so we obtain
\[
\underline{\delta}_F(\Ord(C/F(t)))\ge
\delta_F\left(\Ord(C/F(t))\cap \bigsqcup_{k\in\cK_\va}\cP_{1,F,k}\right)
=\frac{|\cK_\va|}{|H|}\ge \frac{1}{|H|}=\frac{1}{[L:F]}.
\]
This proves the first statement.

(2)
Let $\cP_{1,L,\ge 2d-1}$ denote the
subset of $\cP_{1,L}$ whose residue characteristic $p$ satisfies
$p\ge 2d-1$.

Let $\fP\in\cP_{1,L,\ge 2d-1}$.
Then $\kappa(\fP)=\F_p$, and $p\nmid d$.
Since $p\nmid d$, the reduction homomorphism
$\cO_{L,\fP}^\times \longrightarrow \kappa(\fP)^\times$
is injective on its prime-to-$p$ torsion subgroup.
Indeed, suppose that $u\equiv 1\pmod \fP$ and $u^n=1$, where $p\nmid n$.
Then $$0=u^n-1=(u-1)(1+u+\cdots+u^{n-1}),$$
while
\[
1+u+\cdots+u^{n-1}\equiv n \not\equiv 0 \pmod \fP,
\]
so the second factor is a unit in $\cO_{L,\fP}$, and hence $u=1$.
Since $\zeta_d\in\cO_L^\times$
has order $d$ and $p\nmid d$, its reduction also has order $d$ in
$\kappa(\fP)^\times = \F_p^\times$.
Hence
$d\mid p-1$, so
$p\equiv 1\pmod d$.
Theorem~\ref{T:main3} therefore shows that
$\fP\in\Ord(C_L/L(t))$.
This proves
\begin{equation}\label{E:include}
\cP_{1,L,\ge 2d-1}\subseteq \Ord(C_L/L(t)).
\end{equation}

By \eqref{E:include}, $\Ord(C_L/L(t))\cap\cP_{1,L}$ contains
$\cP_{1,L,\ge 2d-1}$.
Since $\cP_{1,L}\backslash \cP_{1,L,\ge 2d-1}$ is finite and $\delta_L(\cP_{1,L})=1$
by Lemma~\ref{L:degree1}, we have
\begin{equation}\label{E:fff}
\delta_L(\Ord(C_L/L(t))\cap \cP_{1,L})=\delta_L(\cP_{1,L})=1.
\end{equation}
Thus $\delta_L(\Ord(C_L/L(t)))=1$.
\end{proof}

\end{document}